\newcommand{\R}{\mathbb{R}}
\newcommand{\C}{\mathbb{C}}
\newcommand{\N}{\mathbb{N}}
\newcommand{\D}{\mathbb{D}}
\newcommand{\T}{\mathbb{T}}
\newcommand{\A}{\mathscr{A}}
\newcommand{\supp}{\text{supp}}
\newcommand{\Res}{\text{Res}}
\newtheorem{thm}{Theorem}
\newtheorem{lem}[thm]{Lemma}
\newtheorem{cor}[thm]{Corollary}
\newtheorem{prop}[thm]{Proposition}
\theoremstyle{definition}
\newtheorem{eg}{Example}
\newtheorem{prop2}[thm]{Proposition}
\newcommand{\vp}{\varphi}
\newcommand{\ve}{\varepsilon}
\title[Adjoints of Composition Operators on $\C^+$]{Adjoints of composition operators on Hardy spaces of the half-plane}
\author{Sam Elliott}
\address{Department of Pure Mathematics\\ University of Leeds\\ Leeds\\ LS2 9JT\\ UK}
\email{samuel@maths.leeds.ac.uk}
\keywords{Composition operator, Adjoint, Hardy space, Aleksandrov-Clark measure}
\begin{document}
\maketitle
\begin{abstract}
Building on techniques used in the case of the disc, we use a variety of methods to develop formulae for the adjoints of composition operators on Hardy spaces of the upper half-plane. In doing so, we prove a slight extension of a known necessary condition for the boundedness of such operators, and use it to provide a complete classification of the bounded composition operators with rational symbol. We then consider some specific examples, comparing our formulae with each other, and with other easily deduced formulae for simple cases.
\end{abstract}

\section*{Introduction}
A great deal of work has already taken place in studying the properties of analytic composition operators on Hardy spaces on the unit disc $\D$ of the the complex plane. It has long been known that all such operators are bounded on all the Hardy spaces (and indeed on a great many other spaces too), and a number of characterisations of compactness and weak compactness have also been produced, including those of Cima and Matheson \cite{Cima2}, Sarason \cite{Sarason} and Shapiro \cite{Shapiro}.

In contrast, relatively little is known about composition operators acting on Hardy spaces of a half-plane. Although corresponding Hardy spaces of the disc and half-plane are isomorphic, composition operators act very differently in the two cases. It is known, for example, that not all analytic composition operators are bounded, though no satisfactory characterisation of boundedness has yet been found; moreover, Valentin Matache showed in \cite{Matache} that there are in fact no compact composition operators in the half-plane case. The question of when an operator is isometric has now been dealt with in both cases, however: in the disc by Nordgren \cite{Nordgren}, and more recently in the half-plane by Chalendar and Partington \cite{Chalendar03}.

Lately, a good deal of research has concentrated on describing the adjoints of analytic composition operators on the disc. Much of this work has been concerned with the Hardy space $H^2(\D)$ which, being a subspace of $L^2(\T)$, is a Hilbert space and hence self-dual, meaning adjoints play a particularly important r\^ole in its structure. Here $\T$ denotes the unit circle in the complex plane.

In \cite{Cowen2} Carl Cowen produced the first adjoint formulae for the case where the composing map is fractional linear. It has since been shown that for all the Hardy spaces on the disc, an important generalisation of Aleksandrov's Disintegration Theorem \cite{Aleksandrov87}, gives rise to a formula for for the pre-adjoint of a composition operator in terms of what are now called Aleksandrov-Clark (AC) measures. The same method has been shown to work for the $L^p$ spaces on $\T$, and even the space of Borel measures on $\T$. In particular, since $H^2$ and $L^2$ are both Hilbert spaces, in these cases this formula gives a description of the adjoint of the composition operator as well.

More recently, John McDonald \cite{McDonald} produced an explicit adjoint formula for operators induced by a finite Blaschke products. In the last few years, Cowen together with Eva Gallardo-Guti\'errez \cite{Cowen1} developed a method, later corrected by Hammond, Moorehouse and Robbins \cite{Hammond}, which gave a characterisation in the more general case of an operator on $H^2$ with rational symbol. The formula shows that the adjoint of each such operator is a so-called `multiple-valued weighted composition operator', plus an additional term. A simplified proof of the formula has since been given by Paul Bourdon and Joel Shapiro \cite{Bourdon}.

We begin by generalising the notion of Aleksandrov-Clark measures to the half-plane (we choose the upper half-plane $\C^+$, as its boundary is the most natural to work with for our purposes). This generalisation has already been made by a number of authors, though not with our intentions in mind. We show that, subject to a certain condition necessary for a composition operator to be bounded, a characterisation of the pre-adjoint of a composition operator can also be made on the half-plane using AC measures; again this will give an adjoint formula for the case where $p=2$.

The middle sections of this paper will then be devoted to the study of composition operators with rational symbol. We prove a complete characterisation of the boundedness of such operators, as well as a number of other results along the same lines. Having made this characterisation, we use integral methods in the vein of \cite{Hammond} to find an explicit formula for the adjoint of a composition operator on $H^2(\C^+)$ with rational symbol, which will turn out to be a multiple-valued weighted composition operator, but this time without any additional terms.

Finally, we present some examples including the simplest case (an operator with linear symbol), and a slightly more complicated function known to be an isometry by the results of \cite{Chalendar03}.

\section{Preliminaries}

For $1\le p<\infty$, the Hardy space $H^p(\C^+)$ is the Banach space of analytic functions $f:\C^+\rightarrow\C$ such that the norm
\[
 \|f\|_p = \sup_{y\in\R} \left(\int_\R |f(x+iy)|^p dx\right)^\frac{1}{p} < \infty \text{.}
\]
The space $H^\infty(\C^+)$ is the space of all bounded analytic functions on $\C^+$ together with the supremum norm. It can easily be shown that each $H^p$-space is a subspace of the corresponding $L^p(\R)$-space by equating each Hardy space function with its boundary function, reached via non-tangential limits; equivalently, for $p<\infty$ it is possible to extend any $L^p$ function to the half-plane by integrating with respect to the Poisson kernels. As such, we see that $H^2$ is in fact a Hilbert space, being a subspace of $L^2$. An analagous construction may be made for the disc, and a natural identification of the disc with half-plane induces an isomorphism between each $H^p(\C^+)$ and the equivalent Hardy space of the disc. We will explore this identification further in Section \ref{Rational}.

For an analytic map $\vp:\C^+\rightarrow\C^+$, we may define the composition operator with symbol $\vp$, which can be considered to act on any of the spaces $H^p(\C^+)$ or $L^p(\R)$. Given such a mapping $\vp$, this operator, written $C_\vp$ is defined by the formula
\[
 C_\vp f = f\circ \vp \text{.}
\]
For $f\in L^p$, we may either extend $f$ to the half-plane and compose it with $\vp$, or extend $\vp$ to $\R$ and use this for composition, the two methods are entirely equivalent. In the case of disc, the Aleksandrov-Clark (AC) measures of an analytic function, $\psi : \D\rightarrow\D$, were constructed via the collection of functions given by
\[
 u_\beta (z) = \Re \left( \frac{\beta + \psi (z)}{\beta - \psi (z)} \right) \text{,}
\]
for $\beta\in\T$. Each $u_\beta$ can be shown to be positive and harmonic on the disc, and so, via the Riesz-Herglotz representation theorem, each may be written as the Poisson integral of a (finite) positive measure on the unit circle. This collection, indexed by $\T$, is known as the collection of Aleksandrov-Clark (AC) measures associated with $\psi$, and denoted $\mathscr{A}_\psi$. For a full description of the construction, see for example \cite{Cima,Kapustin06,Saksman}.

A number of results are well known about AC measures in the disc case, most particularly the following theorem from \cite{Aleksandrov87}, reproduced in a number of other works, including for example \cite{Cima}, page 216.

\begin{thm}\textbf{(Aleksandrov's Disintegration Theorem)}\label{Disintegration}

Let $\psi$ be an analytic self-map of the disc, and $\mathscr{A}_\psi=\{\mu_\beta:\beta\in\T\}$ be the collection of AC measures associated with $\psi$. Then for each function $f\in L^1(\T)$,
\[
\int_\T\left(\int_\T f(\zeta) d\mu_\beta(\zeta)\right) dm(\beta) = \int_\T f(\zeta) dm(\zeta) \text{,}
\]
where $m$ denotes normalised Lebesgue measure on $\T$.
\end{thm}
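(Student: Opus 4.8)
The plan is to prove the identity first for Poisson kernels, extend it by linearity and density to $C(\T)$, and finally bootstrap to all of $L^1(\T)$ by monotone approximation. Throughout I write $P_z(\zeta)=\frac{1-|z|^2}{|\zeta-z|^2}$ for the Poisson kernel, with $z\in\D$ and $\zeta\in\T$, and recall that by construction each $\mu_\beta$ is the Riesz--Herglotz measure of $u_\beta$, so that $\int_\T P_z\,d\mu_\beta=u_\beta(z)$ for every $z\in\D$. The first thing to record is the reindexing identity $u_\beta(z)=P_{\psi(z)}(\beta)$, obtained simply by rewriting $\Re\frac{\beta+\psi(z)}{\beta-\psi(z)}=\frac{1-|\psi(z)|^2}{|\beta-\psi(z)|^2}$ and reading $\psi(z)\in\D$ as the interior variable and $\beta\in\T$ as the boundary variable. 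Setting $z=0$ then shows each $\mu_\beta$ is finite with total mass $\mu_\beta(\T)=P_{\psi(0)}(\beta)$, and that $\int_\T \mu_\beta(\T)\,dm(\beta)=1$; this uniform-in-$\beta$ integrability is what will later license the interchange of limits.

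The computational heart is the case $f=P_z$ for a fixed $z\in\D$. Here the inner integral is $\int_\T P_z\,d\mu_\beta=u_\beta(z)=P_{\psi(z)}(\beta)$, so the left-hand side becomes $\int_\T P_{\psi(z)}(\beta)\,dm(\beta)=1$, since the Poisson integral of the constant function $1$ at the interior point $\psi(z)$ equals $1$; the right-hand side is $\int_\T P_z\,dm=1$ for the same reason. Thus the identity holds for every Poisson kernel, and hence, by (complex) linearity, for every finite linear combination of them. Such combinations are dense in $C(\T)$: any continuous $f$ is a uniform limit of the Poisson averages $P_r*f$, which are themselves uniform limits of their Riemann sums, and each Riemann sum is a finite combination of kernels $P_{re^{it}}$. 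I then extend to arbitrary $f\in C(\T)$ by dominated convergence, using $\bigl|\int f_n\,d\mu_\beta-\int f\,d\mu_\beta\bigr|\le\|f_n-f\|_\infty\,\mu_\beta(\T)$ together with the domination $\int_\T\mu_\beta(\T)\,dm(\beta)=1$. I would also note in passing that for a finite combination the inner integral is $\beta\mapsto\sum_j c_j P_{\psi(z_j)}(\beta)$, which is continuous, so $\beta\mapsto\int f\,d\mu_\beta$ is measurable for every $f\in C(\T)$.

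The delicate point, and the step I expect to be the genuine obstacle, is the passage to $L^1(\T)$: the measures $\mu_\beta$ may be singular with respect to $m$, so for an $L^1(m)$-class $f$ the inner integral $\int f\,d\mu_\beta$ is not a priori well defined. I would first treat a nonnegative $f\in L^1(m)$ by choosing continuous $g_n$ with $0\le g_n\uparrow f$ $m$-a.e., setting $h=\sup_n g_n$, and applying monotone convergence twice --- once in each $\mu_\beta$ and once in $m$ --- to obtain $\int_\T\bigl(\int_\T h\,d\mu_\beta\bigr)\,dm(\beta)=\lim_n\int_\T g_n\,dm=\int_\T h\,dm$, the middle equality being the already-established $C(\T)$ case. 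The lemma that makes this descend to classes then falls out of this very identity: applied to a nonnegative $h$ with $h=0$ $m$-a.e., it gives $\int_\T\bigl(\int_\T h\,d\mu_\beta\bigr)\,dm(\beta)=0$ with nonnegative integrand, whence $\int_\T h\,d\mu_\beta=0$ for $m$-a.e.\ $\beta$. This shows that $\beta\mapsto\int f\,d\mu_\beta$ is, up to an $m$-null set, independent of the chosen representative, so it genuinely defines a function of the $L^1$-class of $f$ and the stated identity holds for nonnegative $f$. Splitting a general complex $f\in L^1(\T)$ into the positive and negative parts of its real and imaginary parts and invoking linearity then completes the argument; the earlier steps amount to the Poisson-kernel computation plus routine approximation, whereas this final \emph{a.e.}\ well-definedness against singular measures is where the real care is needed.
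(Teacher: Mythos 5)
Your first two steps are sound and standard: the identity $u_\beta(z)=\Re\frac{\beta+\psi(z)}{\beta-\psi(z)}=P_{\psi(z)}(\beta)$, the resulting Poisson-kernel case, the density of the linear span of Poisson kernels in $C(\T)$, and the passage to $C(\T)$ using $\int_\T\mu_\beta(\T)\,dm(\beta)=1$ are all correct. Note for the record that the paper itself offers no proof of this statement --- it is quoted from Aleksandrov's paper and from Cima--Matheson--Ross --- so the comparison is with the standard published argument rather than with anything in the text.

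The genuine gap is in your passage from $C(\T)$ to $L^1(\T)$. You ask for continuous $g_n$ with $0\le g_n\uparrow f$ $m$-a.e.; such a sequence need not exist. The supremum $h=\sup_n g_n$ of countably many continuous functions is lower semicontinuous (LSC), and there are nonnegative $f\in L^1(\T)$ not $m$-a.e.\ equal to any LSC function: take $f=\chi_E$ with $E\subset\T$ closed, nowhere dense, of positive measure (a fat Cantor set). If $h$ were LSC with $h=\chi_E$ a.e., then $\{h>1/2\}\cap(\T\setminus E)$ would be an open null set, hence empty, so $\{h>1/2\}$ would be an open subset of the nowhere dense set $E$, hence empty, contradicting $h=1$ a.e.\ on $E$. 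This defect also makes your well-definedness lemma circular: you may only apply the disintegration identity to functions for which it has been proved, i.e.\ to LSC functions, and for an LSC function $h$ the hypothesis ``$h=0$ $m$-a.e.'' already forces $h\equiv 0$, so the lemma says nothing about general Borel representatives (e.g.\ $\chi_N$ with $N$ a nonempty Borel null set, which is never LSC). The standard repair keeps your first two steps and replaces the third: show $\beta\mapsto\mu_\beta(U)$ is measurable for $U$ open (here $\chi_U$ genuinely \emph{is} an increasing limit of continuous functions), extend measurability of $\beta\mapsto\mu_\beta(E)$ to all Borel $E$ by a monotone class argument, and observe that $E\mapsto\int_\T\mu_\beta(E)\,dm(\beta)$ is then a finite positive Borel measure whose integrals against continuous functions agree with those of $m$; by the Riesz representation theorem it equals $m$. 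Monotone convergence from simple functions gives $\int_\T\bigl(\int_\T f\,d\mu_\beta\bigr)\,dm(\beta)=\int_\T f\,dm$ for every nonnegative Borel $f$, and applying this to $\chi_N$ with $m(N)=0$ yields $\mu_\beta(N)=0$ for $m$-a.e.\ $\beta$, which is exactly the representative-independence you need; your splitting into positive and negative parts then finishes the proof. (Alternatively, a Vitali--Carath\'eodory sandwich of $f$ between an upper semicontinuous minorant and an LSC majorant closes the gap.)
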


\bigskip

In the upper half-plane case, the equivalent construction is as follows:
given an analytic self-map of the upper half-plane, $\varphi$, we note that the function
\[
 u_\alpha (z) = \Re \left(\frac{i (1 + \alpha\varphi(z))}{\varphi(z) - \alpha} \right)
\]
is positive, and harmonic for each $\alpha\in\R$. In fact, this is precisely the function we get by transforming the plane to the disc via the standard M\"obius identification:
\[
 \begin{array}{rlllcrl}
  J:&\D&\rightarrow&\C^+      & & z &\mapsto  i \left(\frac{1-z}{1+z}\right) \\
  J^{-1}:&\C^+&\rightarrow&\D & & s &\mapsto \frac{ i -s}{ i +s} \text{,}
 \end{array}
\]
taking the function $u_\alpha$ from the disc case, and transforming back to the plane. Since $\alpha$ is simply a constant with respect to $z$, the functions given by
\[
 v_\alpha(z) = \frac{1}{\pi(1+\alpha^2)}\Re \left(\frac{i (1 + \alpha\varphi(z))}{\varphi(z) - \alpha} \right)
\]
are also all positive and harmonic, and we will see later that it will be more convenient to use this system for our purposes. We continue with the following theorem from \cite{Garnett}.

\begin{thm}\textbf{(The Upper Half-Plane Herglotz Theorem)}\label{Herglotz}

We denote by $P_y(x-t)$ the upper half-plane Poisson kernel, namely
\[
 P_y(x-t) = \frac{1}{\pi}\frac{y}{(x-t)^2+y^2}\text{.}
\]
If $v:\C^+\rightarrow\R$ is a positive, harmonic function, then $v$ may be written as
\[
 v(x+i y) = c y + \int_\R P_y(x-t) d\mu(t) \text{,}
\]
where $c\ge0$ and $\mu$ is a positive measure such that
\[
 \int\frac{d\mu(t)}{1+t^2} < \infty \text{.}
\]
\end{thm}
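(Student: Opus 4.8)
The plan is to reduce everything to the unit disc, where the corresponding statement is the classical Riesz--Herglotz theorem already invoked above in the construction of the AC measures. Given a positive harmonic $v:\C^+\to\R$, I would first pull it back through the M\"obius identification by setting $w=v\circ J:\D\to\R$. Since $J$ is a conformal bijection of $\D$ onto $\C^+$, the function $w$ is again positive and harmonic, so the disc Herglotz theorem supplies a finite positive measure $\nu$ on $\T$ with
\[
 w(z) = \int_\T \frac{1-|z|^2}{|\beta-z|^2}\,d\nu(\beta)\text{.}
\]
Evaluating at $z=0$ (noting $J(0)=i$) shows $\nu(\T)=w(0)=v(i)<\infty$, and this finiteness will be the source of the required integrability.

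The heart of the argument is to transport this representation back to $\C^+$ and to identify the two terms in the claimed formula. Writing $s=x+iy\in\C^+$, $z=J^{-1}(s)$, and $\beta=J^{-1}(t)$ for $t\in\R$, a direct computation of $1-|z|^2$ and $|\beta-z|^2$ in terms of $s$ and $t$ yields the kernel identity
\[
 \frac{1-|z|^2}{|\beta-z|^2} = \pi(1+t^2)\,P_y(x-t)\text{,}
\]
so that the disc Poisson kernel becomes, up to the weight $\pi(1+t^2)$, exactly the half-plane Poisson kernel. Pushing the restriction $\nu_0:=\nu|_{\T\setminus\{-1\}}$ forward under $J$ and absorbing this weight, I would define $d\mu(t)=\pi(1+t^2)\,d(J_*\nu_0)(t)$ on $\R$; the contribution of $\nu_0$ then becomes precisely $\int_\R P_y(x-t)\,d\mu(t)$, and the integrability condition follows at once since $\int_\R d\mu(t)/(1+t^2)=\pi\,(J_*\nu_0)(\R)=\pi\,\nu_0(\T\setminus\{-1\})\le\pi\,v(i)<\infty$.

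The main obstacle --- and the only genuinely delicate point --- is the behaviour at $\beta=-1$, the single boundary point of $\T$ sent to $\infty$ by $J$, which is therefore not captured by any $t\in\R$. I would handle it by splitting $\nu$ as $\nu(\{-1\})\,\delta_{-1}$ plus $\nu_0$. A short computation gives $1+z=2i/(i+s)$ and hence $(1-|z|^2)/|-1-z|^2=y$, so the atom at $-1$ contributes exactly the linear term $cy$ with $c=\nu(\{-1\})\ge 0$, while $\nu_0$ produces the Poisson integral found above. Some care is needed to confirm that no mass is lost in the pushforward and that $J_*\nu_0$ is a genuine locally finite Borel measure on $\R$; once this is checked, recombining the two pieces gives the stated decomposition $v(x+iy)=cy+\int_\R P_y(x-t)\,d\mu(t)$. (Should one wish to avoid the disc entirely, an alternative is to extract a weak-$*$ limit of the weighted boundary measures $(1+x^2)^{-1}v(x+i\varepsilon)\,dx$ as $\varepsilon\to0$, but the conformal transfer is both shorter and more transparent here.)
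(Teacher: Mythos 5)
Your proof is correct. Note first that the paper contains no proof of this theorem at all: it is quoted verbatim from Garnett's \emph{Bounded Analytic Functions}, so there is no internal argument to compare yours against. Taken on its own merits, your conformal-transfer argument is complete and sound: with $z=J^{-1}(s)$ and $\beta=J^{-1}(t)$ one indeed finds $1-|z|^2=4y/|i+s|^2$ and $\beta-z=2i(s-t)/\bigl((i+t)(i+s)\bigr)$, which gives your kernel identity $\bigl(1-|z|^2\bigr)/|\beta-z|^2=\pi(1+t^2)P_y(x-t)$; the atom computation $\bigl(1-|z|^2\bigr)/|1+z|^2=y$ is right, so the mass $\nu(\{-1\})$ at the boundary point corresponding to $\infty$ produces exactly the term $cy$ with $c\ge 0$; and since $J$ is a homeomorphism of $\T\setminus\{-1\}$ onto $\R$, the pushforward $J_*\nu_0$ is a genuine finite Borel measure, so $\mu$ is a positive, locally finite measure with $\int_\R d\mu(t)/(1+t^2)=\pi\,\nu_0(\T\setminus\{-1\})\le\pi v(i)<\infty$. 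It is worth pointing out that your identification of the linear term with the atom at $-1$ is precisely the correspondence the paper later relies on (and attributes to Garnett, page 19) in the proof of Lemma \ref{calphazero}, where $c_\alpha=0$ a.e.\ is deduced from the absence of point mass at $-1$; so your proof makes explicit the mechanism the paper uses as a black box. Your parenthetical alternative, extracting a weak-$*$ limit of the measures $(1+x^2)^{-1}v(x+i\varepsilon)\,dx$, is the standard direct half-plane proof; either route is legitimate, but the transfer argument has the advantage of exhibiting the $cy$ term structurally rather than as a defect of mass in the limit.
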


We notice that, unlike the disc case, in the half-plane we lose the finiteness of our measures, and there is an additional term of $c\Im(z)$. This additional term corresponds to a point mass existing at a notional point `$\infty$', or equivalently to a point mass at $-1$ on the boundary of the disc.

Using Theorem \ref{Herglotz}, we see that each $v_\alpha$ may be written
\begin{equation}\label{valpha}
 v_\alpha (x+i y) = c_\alpha y + \int_\R P_y(x-t) d\mu_\alpha(t) \text{.}
\end{equation}

We will call the collection of pairs $(\mu_\alpha,c_\alpha)$ the Aleksandrov-Clark (AC) measures associated with $\varphi$, and much as in the disc, denote this collection $\A_\vp$.

\section{The half-plane Aleksandrov Operator}

We begin by noting the following result, which will simplify our future calculations.

\begin{lem}\label{calphazero}
For any function $\varphi:\C^+\rightarrow\C^+$, the constant $c_\alpha$ in \eqref{valpha} takes the value zero for $m$-almost every $\alpha$.
\end{lem}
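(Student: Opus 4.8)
The plan is to sidestep any direct analysis of the boundary behaviour of $\vp$ and instead exploit the fact that, \emph{as a family}, the functions $v_\alpha$ carry unit total mass. The first step is a purely algebraic simplification of the definition of $v_\alpha$. Multiplying numerator and denominator of the fraction inside the real part by $\overline{\vp(z)-\alpha}$ and collecting terms, I expect the $(1+\alpha^2)$ factor to cancel against the normalising constant, leaving the clean Poisson-kernel expression
\[
v_\alpha(z) = \frac{1}{\pi}\,\frac{\Im\vp(z)}{|\vp(z)-\alpha|^2},
\]
valid for every $z\in\C^+$; note there is no singularity, since $\Im\vp(z)>0$ forces $\vp(z)\neq\alpha$. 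This rewriting re-proves positivity and makes the $\alpha$-dependence completely transparent.

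The key step is then to integrate over the parameter. Writing $\vp(z)=a+ib$ with $b=\Im\vp(z)>0$ and using the elementary integral $\int_\R \frac{d\alpha}{(a-\alpha)^2+b^2}=\pi/b$, I obtain the partition-of-unity identity
\[
\int_\R v_\alpha(z)\,d\alpha = 1 \qquad (z\in\C^+),
\]
exactly mirroring the disc case. Substituting the Herglotz decomposition \eqref{valpha} and integrating in $\alpha$ at a \emph{fixed} point $z=x+iy$, every integrand is nonnegative, so Tonelli applies and gives
\[
1 = y\int_\R c_\alpha\,d\alpha + \int_\R\!\int_\R P_y(x-t)\,d\mu_\alpha(t)\,d\alpha,
\]
with both summands in $[0,\infty]$. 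Since the second term is nonnegative, $y\int_\R c_\alpha\,d\alpha\le 1$ for every $y>0$; letting $y\to\infty$ forces $\int_\R c_\alpha\,d\alpha=0$. As $c_\alpha\ge0$, this yields $c_\alpha=0$ for Lebesgue-almost every $\alpha$, equivalently $m$-almost everywhere.

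Two technical points need care, and this is where I expect to spend the most effort rather than on the analytic content, which is entirely contained in the mass-one identity. First, for $\int_\R c_\alpha\,d\alpha$ to make sense I must know $\alpha\mapsto c_\alpha$ is measurable; this follows because the constant in the Herglotz representation is uniquely determined and recoverable as the pointwise limit $c_\alpha=\lim_{y\to\infty}v_\alpha(iy)/y$, a limit (say through integer $y$) of functions continuous in $\alpha$, hence Borel. Second, I should stress that the displayed three-term identity is a genuine equality of nonnegative quantities, not a subtraction; this is precisely why routing the argument through Tonelli, rather than through the pointwise limit alone, is the clean choice. As a sanity check that confirms the statement is sharp, the map $\vp(z)=-1/z$ gives $c_\alpha=1/\pi$ at $\alpha=0$ and $c_\alpha=0$ otherwise, so the exceptional set can be nonempty and `almost every' cannot be strengthened to `every'.
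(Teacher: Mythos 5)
Your proof is correct, and it takes a genuinely different route from the paper's. The paper transfers the whole problem to the disc via the M\"obius map $J$: it identifies $c_\alpha$ with the point mass at $-1$ of the AC measure of $\widetilde{\vp}=J^{-1}\circ\vp\circ J$ at the point $\hat{\alpha}$, assumes these point masses are nonzero for a set of $\alpha$ of positive measure, and derives a contradiction from Aleksandrov's Disintegration Theorem (Theorem \ref{Disintegration}) by altering an $L^1$ function at the single point $-1$. You instead stay entirely in the half-plane: your algebraic simplification $v_\alpha(z)=\frac{1}{\pi}\,\Im\vp(z)/|\vp(z)-\alpha|^2=P_{\Im\vp(z)}(\Re\vp(z)-\alpha)$ is exactly the computation the paper performs later when evaluating $A_\vp$ on Poisson kernels, and from it you get the mass-one identity $\int_\R v_\alpha(z)\,d\alpha=1$, so that the Herglotz decomposition forces $y\int_\R c_\alpha\,d\alpha\le 1$ for every $y>0$, whence $\int_\R c_\alpha\,d\alpha=0$ and $c_\alpha=0$ a.e. (Note that you do not really need Tonelli here: it is just the splitting of the integral of a sum of two nonnegative measurable functions, valid in $[0,\infty]$.) Your approach buys self-containedness: it avoids both the disc transfer and the appeal to the disintegration theorem, it makes the measurability of $\alpha\mapsto c_\alpha$ explicit via $c_\alpha=\lim_{y\to\infty}v_\alpha(iy)/y$ (a point the paper's proof does not address, and which is justified by dominated convergence using $\int(1+t^2)^{-1}\,d\mu_\alpha(t)<\infty$), and en route it establishes a half-plane disintegration identity at the level of Poisson extensions, which is morally what the paper imports from the disc. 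The paper's argument, in turn, reinforces its stated interpretation of $c_\alpha$ as the mass at the boundary point $-1$ (i.e.\ at $\infty$) and reuses established disc machinery rather than re-deriving it. Your closing example $\vp(z)=-1/z$, with $c_0=1/\pi$ and $c_\alpha=0$ for $\alpha\neq 0$, is a worthwhile addition: it shows the exceptional set can be nonempty, so the almost-everywhere conclusion is sharp.
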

\begin{proof}
We take the function $\varphi$, and construct the collection of functions $v_\alpha$ as above. For $\alpha\in\R$, we denote by $\hat{\alpha}$ the corresponding point on the circle $\T$, via the standard identification.

We may also translate the functions $\vp$ and $v_\alpha$ to eqivalent functions on the disc: we denote
\[
 \begin{array}{rlllcrl}
  \widetilde{\vp}:&\D&\rightarrow&\D      & & \widetilde{\vp} & = J^{-1} \circ \vp \circ J \\
  \widetilde{v}_{\alpha}:&\D & \rightarrow & \R^+ & & \widetilde{v}_{\alpha} & = v_\alpha \circ J \text{.}
 \end{array}
\]
We observe  that
\begin{align*}
 \widetilde{v}_{\alpha} (z) & = \frac{1}{\pi(1+\alpha^2)}\Re \left(\frac{i (1 + \alpha\varphi(J(z)))}{\varphi(J(z)) - \alpha} \right) \\
  & = \frac{1}{\pi(1+\alpha^2)}\Re \left(\frac{\hat{\alpha}+\widetilde{\vp}(z)}{\hat{\alpha}-\widetilde{\vp}(z)} \right)\text{,}
\end{align*}
by construction. So the functions $\widetilde{v}_{\alpha}$ are positive multiples of the functions $u_{\hat{\alpha}}$, and the measures they define via Herglotz' Theorem will have point masses in the same places.

By Garnett (\cite{Garnett}, page 19), the value of $c_\alpha$ corresponds to the point mass of the measure given by $\widetilde{v}_{\alpha}$ at $-1$. As such, $c_\alpha$ is zero if and only if that measure has no point mass at $-1$, or equivalently by the above, the AC measure associated with $\widetilde{\varphi}$ and $\hat{\alpha}$ has no point mass at $-1$.

Let us suppose that the AC measure associated with $\widetilde{\varphi}$ and $\hat{\alpha}$ had a non-zero point mass at ${-1}$ for a set of $\hat{\alpha}$ of positive Lebesgue measure. We denote by $\{\mu_\alpha\}$ the collection of all AC measures associated with $\widetilde{\varphi}$.

Let $f$ be an $L^1$ function on $\T$, then by the standard Aleksandrov disintegration theorem (Theorem \ref{Disintegration} above) we have
\begin{align*}
 \int_\T f(\zeta) dm(\zeta) & = \int_\T \int_\T f(\zeta) d\mu_\alpha(\zeta) dm(\alpha) \\
  & = \int_\T \int_{\T\setminus\{-1\}} f(\zeta) d\mu_\alpha(\zeta) dm(\alpha) + \int_\T \int_{\{-1\}} f(\zeta) d\mu_\alpha(\zeta) dm(\alpha) \\
  & = \int_\T \int_{\T\setminus\{-1\}} f(\zeta) d\mu_\alpha(\zeta) dm(\alpha) + f(-1) \int_\T k_\alpha dm(\alpha) \text{,}
\end{align*}
where $k_\alpha$ is the value of the point mass of $\mu_\alpha$ at $-1$. If we change the value of $f$ at the point $-1$ (a set of Lebesgue measure zero), the left hand side of this equality will remain unchanged, but the right hand side will change, since $k_\alpha$ is non-zero on a set of positive Lebesgue measure. This is a contradiction, hence $\mu_\alpha(\{-1\})$ cannot be non-zero on a set of positive Lebesgue measure, and thus $c_\alpha = 0$ for $m$-almost every $\alpha$.
\end{proof}

We now define the Aleksandrov Operator, $A_\varphi$, of symbol $\varphi$, to be the operator
\[
 A_\varphi f(\alpha) = \int_\R f(t) d\mu_\alpha(t) \text{.}
\]
This operator may be allowed to act on any number of function spaces on the upper-half plane, but for the moment, we will simply consider this definition to be true `whenever the integral makes sense'. It is clear that this is a linear operator.

\subsection{Functions which map $\infty$ to itself} We begin by looking at how the Aleksandrov Operator acts on a Poisson kernel. Taking $z=x+i y$, we let $f_z(t) = P_y(x-t)$. By definition, we have
\begin{align*}
 A_\varphi f_z(\alpha) & = \int_\R P_y(x-t) d\mu_\alpha(t) \\
& = \frac{1}{\pi(1+\alpha^2)} \Re \left( i \frac{1 + \alpha \varphi(z)}{\varphi(z) - \alpha} \right) \qquad \qquad \tag{$\begin{smallmatrix} \text{$m$-almost everywhere,} \\ \text{by Lemma \ref{calphazero}} \end{smallmatrix}$}\\
& = \Re \left( \frac{i}{\pi(1 + \alpha^2)} \frac{\overline{\varphi(z)} - \cancelto{\text{imaginary}}{\alpha + \alpha|\varphi(z)|^2} - \alpha^2\varphi(z)}{(\Re(\varphi(z)) -\alpha)^2 + \Im(\varphi(z))^2} \right)
\intertext{Rearranging, we get}
 & = \frac{1}{\pi(\cancel{1+\alpha^2})} \frac{\cancel{(1+\alpha^2)} \Im (\varphi(z))}{(\Re(\varphi(z)) -\alpha)^2 + \Im(\varphi(z))^2} \\
& = P_{\Im (\varphi(z))} (\Re(\varphi(z)) - \alpha) \\
& = f_{\varphi(z)} (\alpha) \text{.}
\end{align*}
We aim to show a level of duality between the Aleksandrov Operator, and the composition operator $C_\varphi$. Let us for the moment assume that
\[
 \varphi(\infty) = \lim_{|z|\rightarrow\infty} \varphi(z) = \infty \text{.}
\]
Then for each $M\in\N$, there is some $N\in\N$ such that $|\varphi(z)|>M$ whenever $|z|>N$. In particular, if $g$ has compact support in $\R$, then
\[
 \supp(g)\subseteq\{z:|z|<M_0\} \quad \text{for some $M_0\in\N$,}
\]
and so
\[
 \supp(C_\varphi(g))=\{z:\varphi(z)\in\supp(g)\}\subseteq\{z:|z|<N_0\} \quad \text{for some $N_0\in\N$.}
\]
In other words, $C_\varphi g$ has compact support.

We begin by taking $f_z$ as above, which is a continuous $L^p$-function on $\R$, for each $1\le p \le \infty$. We also take $g$ to be a continuous function on $\R$ with compact support. By the above, we have
\begin{align}
 \int_\R A_\varphi f_z(\alpha) g(\alpha) dm(\alpha) & = \int_\R P_{\Im (\varphi(z))} (\Re(\varphi(z)) - \alpha) g(\alpha) dm(\alpha) \notag\\
\tag{$\begin{smallmatrix} \text{since Poisson kernels are} \\ \text{reproducing kernels for $L^p$}\end{smallmatrix}$} & = g(\varphi(z)) \notag\\
& = C_\varphi g (z) \notag\\
& = \int_\R P_y(x-t) C_\varphi g(t) dm(t)  \label{2} \text{,}
\end{align}
since $C_\varphi g$ has compact support.

In order to continue, we will need the following.

\begin{thm}\label{boundedness}
Let $\varphi$ be an analytic self map of $\C^+$, which maps $\infty$ to itself. Then the operator, $A_\varphi$ is bounded on $L^p(\R)$ if and only if $C_\varphi$ is bounded on $L^q(\R)$, where $1/p+1/q=1$.
\end{thm}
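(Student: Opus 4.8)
The plan is to read the identity \eqref{2} as the assertion that $A_\varphi$ and $C_\varphi$ are formal adjoints of one another with respect to the bilinear pairing
\[
 \langle u,v\rangle = \int_\R u(t)\,v(t)\,dm(t)\text{,}
\]
which identifies $L^q(\R)$ with the dual of $L^p(\R)$ when $1/p+1/q=1$. Since a densely defined operator on $L^p$ is bounded precisely when its adjoint on the dual space is bounded, and then with the same norm, establishing this adjoint relation on a sufficiently rich class of functions should immediately yield the stated equivalence. Essentially all of the analytic content has already been extracted in \eqref{2}; what remains is to promote that relation, valid a priori only for Poisson kernels and compactly supported test functions, into a genuine statement about bounded operators.

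First I would record \eqref{2} in the form
\[
 \langle A_\varphi f_z,\,g\rangle = \langle f_z,\,C_\varphi g\rangle \qquad (z\in\C^+,\ g\in C_c(\R))\text{,}
\]
using that $f_z(t)=P_y(x-t)$. Next I would establish the two density facts on which the argument rests. The linear span $D$ of the Poisson kernels $\{f_z:z\in\C^+\}$ is dense in $L^p(\R)$ for $1\le p<\infty$: by Hahn--Banach it suffices to check that any $g\in L^q(\R)$ annihilating every $f_z$ must vanish, and indeed $\langle f_z,g\rangle=0$ for all $z$ says exactly that the Poisson extension of $g$ is identically zero on $\C^+$, which forces $g=0$ almost everywhere. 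Secondly, $C_c(\R)$ is dense in $L^q(\R)$ for $q<\infty$ and is norming for $L^p(\R)$, in the sense that $\|h\|_p=\sup\{|\langle h,g\rangle|:g\in C_c(\R),\ \|g\|_q\le1\}$.

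With these in hand the two implications are symmetric duality estimates. Suppose $C_\varphi$ is bounded on $L^q$ with norm $K$. For $f\in D$ and $g\in C_c(\R)$ with $\|g\|_q\le1$ we have
\[
 |\langle A_\varphi f,g\rangle| = |\langle f,C_\varphi g\rangle| \le \|f\|_p\,\|C_\varphi g\|_q \le K\,\|f\|_p\text{,}
\]
and taking the supremum over such $g$ gives $\|A_\varphi f\|_p\le K\|f\|_p$ on the dense subspace $D$, so $A_\varphi$ extends to a bounded operator on $L^p$. Conversely, if $A_\varphi$ is bounded on $L^p$, then for $g\in C_c(\R)$ the same identity together with the density (hence norming property) of $D$ gives
\[
 \|C_\varphi g\|_q = \sup_{f\in D,\ \|f\|_p\le1} |\langle f,C_\varphi g\rangle| = \sup_{f\in D,\ \|f\|_p\le1} |\langle A_\varphi f,g\rangle| \le \|A_\varphi\|\,\|g\|_q\text{,}
\]
so $C_\varphi$ is bounded on the dense subspace $C_c(\R)$ of $L^q$ and therefore on all of $L^q$.

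The hard part is not any single estimate but the careful passage from the pointwise identity to operator bounds: one must verify that $A_\varphi f$ is a well-defined $L^p$ function so that the pairings are meaningful (recall that $A_\varphi$ is so far defined only \emph{whenever the integral makes sense}), and that the bounded extension furnished by density genuinely coincides with $A_\varphi$ where the latter is already defined. The density of the Poisson span, obtained from the vanishing of harmonic extensions, is the technical heart of the proof. Finally, a little care is needed at the endpoints $p\in\{1,\infty\}$, where $C_c(\R)$ fails to be dense in $L^\infty$ and one direction of the duality must instead be run through the weak-$*$ topology or a separate approximation; for the reflexive range $1<p<\infty$ the argument above is complete as stated.
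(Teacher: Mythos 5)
Your proof is correct and follows essentially the same route as the paper's: both read identity \eqref{2} as a duality pairing between $A_\varphi$ and $C_\varphi$, and combine it with the density of the linear span of Poisson kernels in $L^p$ and of continuous compactly supported functions in $L^q$. The only real difference is organizational: you prove the reverse implication directly, by bounding $\|C_\varphi g\|_q$ for compactly supported $g$ against the Poisson span, whereas the paper argues by contrapositive with an arbitrary $g\in L^q$; your version is in fact slightly cleaner, since it only ever invokes \eqref{2} for compactly supported $g$, which is exactly where that identity was established.
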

\begin{proof}
 Suppose $C_\varphi$ is bounded on $L^q(\R)$. We begin by taking $f_z$ as above. Since $C_\varphi$ is bounded on $L^q$, it must map $L^q$ into itself. Moreover, $f_z\in L^p$ for each $p$, and so, by taking $L^q$ limits of the compact support function $g$ in \eqref{2}, we have that
\[
 \int_\R A_\varphi f_z(\alpha) g(\alpha) dm(\alpha) = \int_\R f_z(t) C_\varphi g(t) dm(t) \text{,}
\]
for all $g\in L^q(\R)$, since functions of compact support are dense in each $L^q$.

Taking suprema over all possible $g$ of norm $1$, we get (by the duality of $L^p$ and $L^q$)
\begin{align*}
 \| A_\varphi f_z \|_p & = \sup_{\|g\|=1} \int_\R f_z (t) C_\varphi g(t) dm(t) \\
& \le \|f_z\|_p \|C_\varphi\|_{L^q\rightarrow L^q} \text{,}
\end{align*}
and so $A_\varphi$ is bounded on Poisson kernels, and similarly, on finite linear combinations of Poisson kernels. We know, however, that the linear span of Poisson kernels is dense in each $L^p$, and hence by the Hahn-Banach theorem, $A_\varphi$ must be bounded on the whole of $L^p$.
\bigskip

Suppose now that $C_\varphi$ is not bounded on $L^q$. Then given $M\in\N$, there is a $g\in L^q$ with $\|g\|=1$ such that
\[
 \| C_\varphi g \| > M \text{.}
\]
As such, by the density of linear combinations of Poisson kernels in $L^p$, there must be some finite linear combination of Poisson kernels, $f$, with $\|f\|=1$ and
\begin{align*}
 \int_\R f(t) C_\varphi g(t) dm(t) & > M \text{.} \qquad
\intertext{Hence}
 \|A_\varphi f\|_p & > M \text{,} \qquad
\intertext{giving}
 \|A_\varphi\|& > M \text{,} \qquad
\end{align*}
and so $A_\varphi$ is not bounded.
\end{proof}

All this leads us to our first important result.

\begin{thm}\label{Thm1}
Assume $\varphi:\C^+\rightarrow\C^+$ is analytic, with $\varphi(\infty)=\infty$, and $1<p,q<\infty$ with $1/p+1/q=1$. Whenever $C_\varphi:L^q\rightarrow L^q$ is bounded, it is the adjoint of $A_\varphi:L^p\rightarrow L^p$.
\end{thm}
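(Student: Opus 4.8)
The plan is to verify directly that $C_\varphi$ satisfies the defining property of the adjoint of $A_\varphi$; that is, to establish the identity
\[
\int_\R A_\varphi f(\alpha)\, g(\alpha)\, dm(\alpha) = \int_\R f(t)\, C_\varphi g(t)\, dm(t)
\]
for every $f\in L^p$ and $g\in L^q$, where these integrals realise the standard (bilinear) $L^p$--$L^q$ duality pairing. Since $C_\varphi$ is bounded on $L^q$ by hypothesis, Theorem \ref{boundedness} guarantees that $A_\varphi$ is bounded on $L^p$, so both sides are well defined for all such $f$ and $g$, and once the identity is proved it says exactly that $C_\varphi=(A_\varphi)^*$.

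First I would observe that this identity has, in effect, already been obtained on a dense subspace. Indeed, the computation leading to \eqref{2}, together with the limiting argument at the start of the proof of Theorem \ref{boundedness}, shows precisely that
\[
\int_\R A_\varphi f_z(\alpha)\, g(\alpha)\, dm(\alpha) = \int_\R f_z(t)\, C_\varphi g(t)\, dm(t)
\]
holds for every Poisson kernel $f_z$ and every $g\in L^q$. By linearity of $A_\varphi$ and of the integral, the same identity then holds whenever $f$ is a finite linear combination of Poisson kernels.

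Next I would fix $g\in L^q$ and regard each side as a linear functional of $f\in L^p$. The right-hand side is continuous because $C_\varphi g\in L^q=(L^p)^*$, so it defines a bounded functional via H\"older's inequality. The left-hand side is continuous because $A_\varphi$ maps $L^p$ boundedly into $L^p$ (again by Theorem \ref{boundedness}), and pairing $A_\varphi f$ against the fixed element $g\in L^q$ is bounded. These two functionals agree on the linear span of the Poisson kernels, which is dense in $L^p$; being continuous, they must therefore coincide on all of $L^p$. This yields the displayed adjoint identity for every $f\in L^p$ and $g\in L^q$, completing the proof.

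I do not expect a serious obstacle, since the analytic content — evaluating $A_\varphi$ on a Poisson kernel and recognising the reproducing-kernel identity — was already carried out in the derivation of \eqref{2}. The only point requiring care is the bookkeeping: one must ensure that the bounded extension of $A_\varphi$ to all of $L^p$ supplied by Theorem \ref{boundedness} is exactly the object appearing on the left, so that the density argument legitimately transports the identity from the span of Poisson kernels to the whole space. Granting the density of that span in $L^p$, which was invoked already above, the argument is complete.
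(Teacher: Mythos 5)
Your proposal is correct and follows essentially the same route as the paper: both arguments extend the reproducing-kernel identity \eqref{2} from Poisson kernels and compactly supported functions to all of $L^p\times L^q$ by density, using Theorem \ref{boundedness} to get boundedness of $A_\varphi$ from that of $C_\varphi$. Your write-up is in fact somewhat more explicit than the paper's (which simply asserts that taking linear combinations and limits preserves the identity), since you spell out that for fixed $g$ the two sides are continuous linear functionals of $f$ agreeing on a dense subspace.
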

\begin{proof}
We return to the identity (2). Provided we ensure the integral remains finite, we may take linear combinations, and then limits of Poisson kernels, and similarly for continuous functions of compact support, and the same identity will clearly hold. Therefore, whenever both $A_\varphi$ and $C_\varphi$ are bounded, taking $L^p$ and $L^q$ limits respectively, since the Poisson kernels are dense in each $L^p$, and the continuous functions of compact support are dense in each $L^q$, the identity (2) remains true. In particular, $C_\varphi:L^q\rightarrow L^q$ is the adjoint of $A_\varphi:L^p\rightarrow L^p$ for each such $p$ and $q$.
\end{proof}

\subsection{More general analytic functions}We will now remove the assumption that $\varphi$ must map $\infty$ to itself. We first note the following, which is a slight extension of Corollary 2.2 from \cite{Matache}:

\begin{prop}\label{infmeasure}
 If $\varphi:\C^+\rightarrow\C^+$ is bounded on some set of infinite measure on $\R$, then $C_\varphi$ is not a bounded operator on $L^p(\R)$, or $H^p(\C^+)$ for any $1\le p <\infty$. It is also not bounded on $C_0(\R)$.
\end{prop}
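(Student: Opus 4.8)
The plan is to show that $C_\varphi$ fails even to map each of the stated spaces into itself, which is a fortiori enough to rule out boundedness. By hypothesis there is a set $E\subseteq\R$ of infinite Lebesgue measure and a constant $M$ with $|\varphi(x)|\le M$ for (almost) every $x\in E$; thus $\varphi$ sends $E$ into the compact set $K=\{w\in\overline{\C^+}:|w|\le M\}$. The whole idea is to feed $C_\varphi$ a single test function of finite norm that is bounded below on $K$, so that its composition with $\varphi$ is bounded below on the infinite-measure set $E$, forcing an infinite norm.

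For the $L^p(\R)$ and $H^p(\C^+)$ cases I would take the single function $f(z)=(z+i)^{-2}$. This $f$ is analytic on $\overline{\C^+}\setminus\{-i\}$, lies in $H^p(\C^+)$ (and hence in $L^p(\R)$) for every $p\ge1$ because $|f(x)|=(x^2+1)^{-1}$ is $p$-integrable on $\R$, and it is continuous and nonvanishing on $\overline{\C^+}$, so $|f|\ge\delta:=(M+1)^{-2}>0$ throughout $K$. Consequently $|f(\varphi(x))|\ge\delta$ for every $x\in E$, and therefore $\int_\R|f(\varphi(x))|^p\,dx\ge\delta^p|E|=\infty$. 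For $L^p(\R)$ this directly says $C_\varphi f\notin L^p(\R)$; for $H^p(\C^+)$ the same conclusion follows once one identifies the boundary function of $f\circ\varphi$ with $f$ composed with the boundary function of $\varphi$ and recalls that the $H^p$-norm equals the $L^p(\R)$-norm of these boundary values. Either way $C_\varphi f$ lies outside the space.

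For $C_0(\R)$ the mechanism is slightly different, since $C_\varphi$ is automatically a contraction for the sup-norm; here the obstruction is instead that the image need not vanish at infinity. Taking a bump $g\in C_0(\R)$ with $0\le g\le1$ and $g\equiv1$ on $[-M,M]$ (or simply the boundary restriction of the $f$ above), we get that $(g\circ\varphi)(x)$ stays above a fixed positive constant for all $x\in E$; since a set of infinite measure is necessarily unbounded and so meets every neighbourhood of infinity, $g\circ\varphi$ cannot tend to $0$ at infinity, whence $C_\varphi g\notin C_0(\R)$.

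The one genuinely delicate point, and the step I would treat most carefully, is the $H^p(\C^+)$ norm computation: one must justify that the non-tangential boundary values of the analytic function $f\circ\varphi$ coincide almost everywhere with $f(\varphi^{*}(\cdot))$, where $\varphi^{*}$ denotes the boundary function of $\varphi$. This is exactly where the continuity of $f$ on $\overline{\C^+}$ is essential: with it, the vertical (hence non-tangential) convergence $\varphi(x+iy)\to\varphi^{*}(x)\in K$ a.e.\ on $E$ yields $|f(\varphi(x+iy))|\to|f(\varphi^{*}(x))|\ge\delta$, and Fatou's lemma applied to $\int_\R|f(\varphi(x+iy))|^p\,dx$ as $y\to0^{+}$ delivers the infinite lower bound for $\sup_{y>0}\int_\R|f(\varphi(x+iy))|^p\,dx$. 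The remaining assertions rest only on the elementary finite/infinite measure estimate above.
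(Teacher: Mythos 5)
Your proposal is correct and follows essentially the same route as the paper's own proof: both feed $C_\varphi$ a test function lying in all the relevant spaces whose modulus is bounded below on $\{w\in\overline{\C^+}:|w|\le M\}$, so that the composed function has modulus bounded below on a set of infinite measure, yielding infinite $L^p$/$H^p$ norm and failure to vanish at infinity for the $C_0(\R)$ case. The only differences are minor: the paper uses the $p$-dependent test functions $1/(1+|z|^{2/p})$ and $(i+z)^{-2/p}$ where you use the single function $(z+i)^{-2}$ for every $p$, and your Fatou-lemma justification of the boundary-value identification for $H^p$ makes explicit a step the paper passes over silently.
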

\begin{proof}
 For $1\le p <\infty$, the function $f_p:\R\rightarrow\R$ given by
\[
 f_p(z) = \frac{1}{1+|z|^{2/p}}
\]
is in $L^p(\R)$. Moreover, each such function is in $C_0(\R)$.

If $\varphi$ is bounded on some set of infinite measure, say $\Sigma$, then we have
\[
 |\varphi(z)|<K
\]
on $\Sigma$, for some $K\in\N$. Now
\[
 \left|C_\varphi f_p(z)\right| = \left|\frac{1}{1+|\varphi(z)|^{2/p}} \right| > \frac{1}{1+K^{2/p}}
\]
on $\Sigma$. Since $\Sigma$ is of infinite measure, we have, setting $1/(1+K^{2/p})=\varepsilon_p$
\[
 m(\{z:C_\varphi f(z)>\varepsilon_p\}) = \infty \text{,}
\]
so $C_\varphi f_p \not\in L^p(\R)$, and hence $C_\varphi$ is not bounded on $L^p(\R)$. Since $C_\varphi f_p(z)>\varepsilon_p$ for arbitrarily large $z$, it is also clear that $C_\varphi f_p(z)\not\rightarrow0$ as $z\rightarrow\infty$, so $C_\varphi f_p\not\in C_0(\R)$, and hence $C_\varphi$ is not bounded on $C_0(\R)$. For the case of $H^p(\C^+)$, we take $g_p$ to be the function
\[
 g_p(z) = \frac{1}{(i+z)^{2/p}}\text{,}
\]
which is in $H^p(\C^+)$. The same argument will give that $C_\varphi g_p \not\in H^p(\C^+)$, indeed it will not even be in $L^p(\R)$. As such, $C_\varphi$ is not bounded on $H^p(\C^+)$.
\end{proof}

We know now that no function which is bounded on some set of infinite measure can give rise to a bounded composition operator. Let us suppose, therefore, that $\varphi$ is unbounded on every set of infinite measure, then for all $M\in\N$,
\[
 m(\{z:|\varphi(z)|<M\})<\infty \text{.}
\]
Indeed, for each $M\in\N$, given $\delta>0$, there exists an $N\in\N$ such that
\[
 m(K)<\delta\text{,}
\]
where
\[
 K=\{z:|\varphi(z)|<M, |z|>N\} \text{.}
\]
So, for all $M\in\N$, given $\varepsilon>0$, there is an $N_\varepsilon\in\N$ such that
\[
 m(\varphi(K_\varepsilon))<\varepsilon \text{,}
\]
where
\[
 K_\varepsilon=\{z:|\varphi(z)|<M, |z|>N_\varepsilon\} \text{.}
\]
Now, let $g$ have compact support, then there is some $M\in\N$ with
\[
 \supp(g)\subseteq\{z:|z|<M\} \text{.}
\]
Given $\varepsilon>0$, we can find an $N_\varepsilon\in\N$ such that
\[
 m(\varphi(K_\epsilon))<\epsilon \text{.}
\]
We now set $g_\ve=g\cdot\chi_{\R\setminus\vp(K_\ve)}$. If $|z|>N_\ve$, then either $\vp(z)\ge M$, in which case $g(\vp(z))=0$, or $z\in K_\ve$, in which case $\chi_{\R\setminus\vp(K_\ve)} (\vp(z))=0$. Either way,
\[
 C_\varphi g_\varepsilon(z) = g_\varepsilon \circ \varphi(z) = 0
\]
for $|z|>N_\varepsilon$, so $C_\varphi g_\varepsilon$ has compact support.

This motivates our next main result, which is a more general version of Theorem \ref{Thm1}.

\begin{thm}
Let $\varphi:\C^+\rightarrow\C^+$ be analytic, and let $1<p,q<\infty$ with $1/p+1/q=1$. Whenever $C_\varphi:L^q\rightarrow L^q$ is bounded, it is the adjoint of $A_\varphi:L^p\rightarrow L^p$.
\end{thm}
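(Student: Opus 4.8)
The plan is to follow the strategy of Theorem \ref{Thm1} as closely as possible, reducing the general case to the identity \eqref{2}. The only place where the hypothesis $\varphi(\infty)=\infty$ entered the derivation of \eqref{2} was in its final step, where the compact support of $C_\varphi g$ let us recover $C_\varphi g(z)$ as the Poisson integral of its boundary values. Crucially, the relation $A_\varphi f_z = f_{\varphi(z)}$ was established earlier with no assumption on $\varphi(\infty)$, resting only on Lemma \ref{calphazero}, so the heart of the matter is to reinstate that reproducing step when $C_\varphi g$ is no longer compactly supported. This is exactly what the truncation from the preceding discussion supplies: assuming $C_\varphi$ is bounded on $L^q$, Proposition \ref{infmeasure} forces $\varphi$ to be unbounded on every set of infinite measure, and hence for each continuous $g$ of compact support we obtain functions $g_\varepsilon = g\cdot\chi_{\R\setminus\varphi(K_\varepsilon)}$ for which $C_\varphi g_\varepsilon$ does have compact support.

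First I would run the chain \eqref{2} verbatim with $g_\varepsilon$ in place of $g$: since $A_\varphi f_z = f_{\varphi(z)}$ is a single Poisson kernel and $C_\varphi g_\varepsilon$ is compactly supported, both reproducing steps are legitimate and we obtain
\[
 \int_\R A_\varphi f_z(\alpha) g_\varepsilon(\alpha)\, dm(\alpha) = \int_\R f_z(t)\, C_\varphi g_\varepsilon(t)\, dm(t)\text{.}
\]
I would then let $\varepsilon\to0$. Because $g$ is bounded and $g-g_\varepsilon$ is supported on $\varphi(K_\varepsilon)$, a set of measure less than $\varepsilon$, we have $\|g-g_\varepsilon\|_q\to0$; boundedness of $C_\varphi$ then yields $\|C_\varphi g-C_\varphi g_\varepsilon\|_q\to0$. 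As $A_\varphi f_z=f_{\varphi(z)}$ and $f_z$ both lie in $L^p$, both sides converge and we recover the clean identity
\[
 \int_\R A_\varphi f_z(\alpha) g(\alpha)\, dm(\alpha) = \int_\R f_z(t)\, C_\varphi g(t)\, dm(t)
\]
for every Poisson kernel $f_z$ and every continuous $g$ of compact support, with no restriction on $\varphi(\infty)$.

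From here the argument is purely formal and mimics Theorems \ref{boundedness} and \ref{Thm1}. Fixing $f_z$ and using the density of continuous compactly supported functions in $L^q$, the identity extends to all $g\in L^q$; taking the supremum over $\|g\|_q=1$ gives $\|A_\varphi f_z\|_p\le\|f_z\|_p\,\|C_\varphi\|_{L^q\to L^q}$, and by linearity together with the density of the span of Poisson kernels in $L^p$ this shows $A_\varphi$ is bounded on $L^p$. With both operators now bounded, I would pass to $L^p$-limits of linear combinations of Poisson kernels and $L^q$-limits of continuous compactly supported functions to conclude that the identity holds for all $f\in L^p$ and $g\in L^q$, which is precisely the statement that $C_\varphi:L^q\to L^q$ is the adjoint of $A_\varphi:L^p\to L^p$. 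The main obstacle is the single limiting step of the second paragraph: everything hinges on the truncation $g_\varepsilon$ simultaneously approximating $g$ in $L^q$ and forcing $C_\varphi g_\varepsilon$ to be compactly supported, and it is here that Proposition \ref{infmeasure}, equivalently the boundedness hypothesis on $C_\varphi$, does the essential work.
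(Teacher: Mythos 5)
Your proposal is correct and takes essentially the same route as the paper's own proof: both run the identity \eqref{2} with the truncations $g_\varepsilon$ (legitimate because $C_\varphi g_\varepsilon$ has compact support), then remove the truncation by a limiting argument and conclude via the density arguments of Theorems \ref{boundedness} and \ref{Thm1}. If anything, your explicit justification of the $\varepsilon\to0$ step---using $\|g-g_\varepsilon\|_q\to0$ and the boundedness of $C_\varphi$ to control both sides---spells out what the paper compresses into ``taking linear spans and closures''.
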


\begin{proof}
If $C_\vp$ is bounded, then $\vp$ must not be bounded on any set of infinite measure. We recall equation (2), taking now $g$ to be continuous with compact support, and $g_\varepsilon$ as above, we have
\begin{align*}
 \int_\R A_\varphi f_z(\alpha) g_\varepsilon(\alpha) dm(\alpha) & = \int_\R P_{\Im (\varphi(z))} (\Re(\varphi(z)) - \alpha) g_\varepsilon(\alpha) dm(\alpha) \\
\tag{$\begin{smallmatrix} \text{since Poisson kernels are} \\ \text{reproducing kernels for $L^P$}\end{smallmatrix}$} & = g_\varepsilon(\varphi(z)) \\
& = C_\varphi g_\varepsilon (z) \\
& = \int_\R P_y(x-t) C_\varphi g_\varepsilon(t) dm(t)  \tag{\ref{2}$'$} \text{,}
\end{align*}
which remains valid since $C_\varphi g_\varepsilon$ has compact support. We note that
\[
 \lim_{\varepsilon\rightarrow0} g_\varepsilon = g
\]
in each $L^p$-norm, so functions of this form are dense in the continuous functions of compact support, which are in turn dense in each $L^p$. Taking linear spans and closures, therefore, we have that $A_\varphi:L^p\rightarrow L^p$ is bounded if and only if $C_\varphi: L^q\rightarrow L^q$ is, and if both are bounded, then $C_\varphi$ is the adjoint of $A_\varphi$.
\end{proof}

Given that $L^2$ is a Hilbert space, we may then deduce the following corollary.

\begin{cor}
 If $C_\vp: L^2(\C^+)\rightarrow L^2(\C^+)$ is bounded, then $\A_\vp$ is its adjoint.
\end{cor}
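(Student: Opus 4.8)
The plan is to read this off as the $p=q=2$ specialisation of the immediately preceding theorem, combined with the self-duality of the Hilbert space $L^2$. First I would check that $p=q=2$ is a legitimate choice in that theorem: we have $1<2<\infty$ and $\tfrac12+\tfrac12=1$, so the hypotheses are met. Since $C_\varphi$ is assumed bounded on $L^2$, the theorem then delivers two facts at once, namely that $A_\varphi=\A_\vp$ is also bounded on $L^2$, and that $C_\varphi$ is the adjoint of $A_\varphi$; in operator notation, $C_\varphi = A_\varphi^{*}$.

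Next I would invoke the fact that $L^2$ is a Hilbert space, and hence self-dual, so that the adjoint operation is an involution on bounded operators: $(T^{*})^{*}=T$ for every bounded $T$ on $L^2$. Applying this with $T=A_\varphi$ and using the identity $C_\varphi=A_\varphi^{*}$ supplied above, I would conclude that $C_\varphi^{*}=(A_\varphi^{*})^{*}=A_\varphi=\A_\vp$. In other words, $\A_\vp$ is the adjoint of $C_\varphi$, which is exactly the assertion of the corollary. No additional density or approximation arguments are needed, since the boundedness of both operators has already been established by the theorem.

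The only point requiring a moment's care is the compatibility of the two pairings in play. The duality of $L^p$ and $L^q$ used to prove the preceding theorem rests on the bilinear form $\langle f,g\rangle=\int_\R f(t)g(t)\,dm(t)$, whereas the Hilbert space adjoint on $L^2$ is defined through the sesquilinear inner product $\int_\R f(t)\overline{g(t)}\,dm(t)$. I would address this by observing that the two differ only by a complex conjugation, and that the involution property $(T^{*})^{*}=T$ holds for whichever convention is fixed consistently; once the pairing is chosen, the symmetry of the relation ``$S$ is the adjoint of $T$'' is automatic. I do not expect this to present any genuine obstacle: the entire content of the corollary is that, on a self-dual space, being the adjoint is a symmetric relation, so the work already done in the theorem immediately gives the reverse statement.
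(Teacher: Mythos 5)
Your proposal is correct and is essentially the paper's own argument: the paper states this corollary with no separate proof, presenting it as an immediate consequence of the preceding theorem (with $p=q=2$) together with the self-duality of the Hilbert space $L^2$, which is exactly your route via $C_\vp^{*}=(A_\vp^{*})^{*}=A_\vp$. Your closing remark on reconciling the bilinear $L^p$--$L^q$ pairing with the sesquilinear inner product is a sensible refinement the paper leaves implicit; it is unproblematic here because the AC measures $\mu_\alpha$ are positive, so $A_\vp$ commutes with complex conjugation and the two notions of adjoint coincide.
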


If we replace the use of Poisson kernels in the preceeding results with the reproducing kernels for the $H^p$ spaces, namely the functions
\begin{equation}\label{repkernel}
 k_z(t) = \frac{1}{\overline{z}-t} \text{,}
\end{equation}
we obtain precisely the same results for the $H^p$ spaces. In particular, we have:

\begin{thm}
Let $1\le p<\infty$. If $C_\vp:H^p(\C^+)\rightarrow H^p(\C^+)$ is bounded, then it is the adjoint of $A_\vp:H^q\rightarrow H^q$, where $1/p+1/q=1$.
\end{thm}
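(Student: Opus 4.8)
The plan is to run the proof of the preceding (general) theorem again, but with the Poisson kernels $f_z(t)=P_y(x-t)$ replaced throughout by the Hardy-space reproducing kernels $k_z$ of \eqref{repkernel}. Two ingredients made the earlier argument work: first, that $A_\vp$ carries a Poisson kernel to the Poisson kernel based at the image point, $A_\vp f_z=f_{\vp(z)}$; and second, that the Poisson kernels reproduce and have dense linear span in each space. Both have exact $H^p$ counterparts, so the skeleton of the argument transfers, and what needs to be supplied is the kernel computation and the reproducing identity in their Hardy-space forms.

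The computational heart is to show that, for $m$-almost every $\alpha$,
\[
 A_\vp k_z(\alpha)=\int_\R \frac{1}{\overline z -t}\,d\mu_\alpha(t)=\frac{1}{\overline{\vp(z)}-\alpha}=k_{\vp(z)}(\alpha)\text{,}
\]
the precise analogue of $A_\vp f_z=f_{\vp(z)}$. I would prove this by splitting $k_z$ into real and imaginary parts exactly as the Poisson computation was split: one has $\Im k_z(t)=\pi P_y(x-t)$, so the imaginary part of the integral is $\pi v_\alpha(z)$, which the earlier calculation already evaluated to $\pi$ times the Poisson kernel based at $\vp(z)$ — precisely $\Im k_{\vp(z)}(\alpha)$. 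The real part $\Re k_z(t)=\pi Q_y(x-t)$ is the conjugate Poisson kernel, so its integral against $\mu_\alpha$ is the conjugate function of $v_\alpha$; using the closed form defining $v_\alpha$ (the real part of the explicit analytic expression built from $\vp$) one identifies this conjugate function with $\Re k_{\vp(z)}(\alpha)$. Equivalently and more cleanly, $\int k_z\,d\mu_\alpha$ is a constant multiple of the Cauchy transform of $\mu_\alpha$, whose analytic completion is pinned down by the formula for $v_\alpha$ coming from $\vp$.

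With this in hand the conclusion follows as before. Using the reproducing property of $k_z$ and the boundedness of $C_\vp$ (so that $C_\vp g$ lies in the relevant Hardy space), one obtains the Hardy-space version of \eqref{2}, namely $\langle A_\vp k_z,g\rangle=\langle k_z,C_\vp g\rangle$: the left side collapses, via reproduction, to the value of $g$ at $\vp(z)$, which is $(C_\vp g)(z)$, and the right side collapses to the value of $C_\vp g$ at $z$, so the two agree. I would emphasise here that the pairing must be taken to be the $H^p$--$H^q$ duality pairing (for $p=2$ the Hilbert inner product), and not the bilinear $\int fg\,dm$ used in the $L^p$ part, since under that bilinear pairing $\int_\R k_z g\,dm$ vanishes for every $g$ in the relevant Hardy space. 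Finally, the linear span of $\{k_z:z\in\C^+\}$ is dense, and by the $H^p$ analogue of Theorem~\ref{boundedness} (proved by the same duality argument) $A_\vp$ is bounded precisely when $C_\vp$ is; boundedness of $C_\vp$ therefore permits passage to the closure and yields that $C_\vp$ is the adjoint of $A_\vp$.

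The main obstacle is the kernel computation of the second paragraph, and specifically the convergence and normalisation concealed in it. Since the AC measures satisfy only $\int d\mu_\alpha(t)/(1+t^2)<\infty$ while $k_z(t)=O(1/t)$ at infinity, the integral $\int k_z\,d\mu_\alpha$ need not converge absolutely, so it must be read as a principal value, or handled through the conjugate-function and Cauchy-transform machinery; moreover the harmonic conjugate of $v_\alpha$ is determined only up to an additive constant, which must be fixed — through the normalisation of $Q_y$ and the behaviour at infinity — so that the real parts genuinely agree, the imaginary parts matching automatically. One should also note that $k_z\notin L^1$, so the endpoint $p=1$ requires the usual separate treatment. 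Once these points are settled, the remainder is a transcription of the earlier proofs.
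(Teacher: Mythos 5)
Your proposal matches the paper's own treatment: the paper proves this theorem precisely by remarking that the preceding $L^p$ arguments carry over verbatim once the Poisson kernels are replaced by the reproducing kernels $k_z$ of \eqref{repkernel}, which is exactly the plan you execute. Indeed your write-up is more careful than the paper's one-sentence proof, since you make explicit the kernel identity $A_\vp k_z = k_{\vp(z)}$, the need for the sesquilinear $H^p$--$H^q$ pairing rather than the bilinear one, and the convergence and normalisation issues (and the $p=1$ endpoint) that the paper leaves implicit.
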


\begin{cor}
 If $C_\vp: H^2(\C^+)\rightarrow H^2(\C^+)$ is bounded, then $\A_\vp:H^2(\C^+)\rightarrow H^2(\C^+)$ is its adjoint.
\end{cor}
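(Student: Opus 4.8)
The plan is to read this off directly from the preceding $H^p$ theorem, specialised to the self-dual case $p=q=2$, together with the fact that on a Hilbert space the adjoint operation is an involution. First I would record two preliminary observations: that $H^2(\C^+)$ is a Hilbert space, so that adjoints are taken within $H^2$ itself rather than across a dual pairing with a distinct space; and that the operator written $\A_\vp$ in the statement is the Aleksandrov operator $A_\vp$ built from the AC measures $(\mu_\alpha,c_\alpha)$ of $\vp$, now regarded as acting on $H^2(\C^+)$.

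Next I would apply the $H^p$ theorem above with $p=2$, whence $q=2$ as well. Under the standing hypothesis that $C_\vp:H^2\to H^2$ is bounded, this gives simultaneously that $A_\vp:H^2\to H^2$ is bounded and that $C_\vp$ is the adjoint of $A_\vp$, that is, $C_\vp=A_\vp^{*}$. The boundedness of $A_\vp$ is what makes this meaningful: it is exactly the `if and only if' obtained in the proof of the general $L^p$ statement and carried over to $H^p$, and without it the symbol $A_\vp^{*}$ would not be guaranteed to coincide with a genuine Hilbert-space adjoint.

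Finally, since $C_\vp$ and $A_\vp$ are both bounded operators on the Hilbert space $H^2(\C^+)$, the involutive identity $(T^{*})^{*}=T$ applies. Taking adjoints of $C_\vp=A_\vp^{*}$ then yields $C_\vp^{*}=(A_\vp^{*})^{*}=A_\vp=\A_\vp$, which is precisely the assertion that $\A_\vp$ is the adjoint of $C_\vp$. No computation beyond invoking these standard facts is required.

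The one point deserving care, rather than a genuine obstacle, is the direction of the adjoint relation. The theorem delivers $C_\vp$ as the adjoint of $A_\vp$, whereas the corollary asserts the converse. Reversing the roles is legitimate only because we are on a Hilbert space and both operators are bounded, so that the adjoint is reflexive; for a general pairing of $L^p$ with $L^q$ (or $H^p$ with $H^q$) when $p\neq 2$ one obtains only the one-sided statement of the theorem, which is why the corollary is phrased specifically for $p=2$.
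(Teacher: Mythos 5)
Your proposal is correct and matches the paper's intent: the paper states this corollary without proof as an immediate consequence of the preceding $H^p$ theorem, precisely because at $p=q=2$ the space $H^2(\C^+)$ is a Hilbert space and the bounded-adjoint relation $C_\vp=A_\vp^{*}$ can be reversed via $(T^{*})^{*}=T$. Your explicit attention to the direction of the adjoint relation and to the boundedness of both operators simply spells out what the paper leaves implicit.
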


\section{Rational self-maps of the upper half-plane}\label{Rational}
We use the mapping given in \cite{Chalendar03}, which identifies the Hardy space on the right half-plane, $H^p(\C_+)$, with the equivalent Hardy space on the disc, and the space $L^p(\T)$ with $L^p(i\R)$. We will need a slight alteration to work with the upper half-plane, $\C^+$, but this change is essentially trivial.

We begin by identifying the disc with the upper half-plane, via the mapping we have already mentioned, namely
\[
 \begin{array}{rlllcrl}
  J:&\D&\rightarrow&\C^+      & & z &\mapsto  i \left(\frac{1-z}{1+z}\right) \\
  J^{-1}:&\C^+&\rightarrow&\D & & s &\mapsto \frac{ i -s}{ i +s} \text{.}
 \end{array}
\]
This natural mapping then gives rise to a unitary equivalence between $H^p(\D)$, and $H^p(\C^+)$ ($1\le p<\infty$), given by
\[
 V:H^p(\D) \rightarrow H^p(\C^+)
\]
\begin{align*}
(Vg)(s) & = \frac{1}{\pi^{1/p}( i +s)^{2/p}} g\left(J^{-1}(s)\right)\\
(V^{-1}G)(z) & = \frac{(2 i )^{2/p}\pi^{1/p}}{(1+z)^{2/p}} G\left(J(z)\right) \text{,}
\end{align*}
the same mapping also identifies $L^p(\T)$ with $L^p(\R)$.

\begin{lem}
 If $\vp:\C^+\rightarrow\C^+$ is an analytic self-map of the upper half-plane, then the composition operator $C_\vp:H^p(\C^+)\rightarrow H^p(\C^+)$ (similarly $L^p(\R)\rightarrow L^p(\R)$) is unitarily equivalent to the weighted composition operator $L_\Phi:H^p(\D)\rightarrow H^p(\D)$ (similarly $L^p(\T)\rightarrow L^p(\T)$), given by
\[
 (L_\Phi f)(z) = \left(\frac{1+\Phi(z)}{1+z}\right)^{2/p} C_\Phi f(z) \text{,}
\]
where $\Phi=J^{-1}\circ\vp\circ J$.
\end{lem}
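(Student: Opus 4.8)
The plan is to verify the single operator identity $V^{-1} C_\vp V = L_\Phi$ directly, since $V$ is the stated unitary equivalence from $H^p(\D)$ onto $H^p(\C^+)$; establishing this pointwise equality on all of $H^p(\D)$ is exactly the assertion that $C_\vp$ and $L_\Phi$ are unitarily equivalent via $V$. So I would fix $f \in H^p(\D)$ and compute $(V^{-1} C_\vp V f)(z)$ by substituting the three defining formulae in turn.

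First I would apply $V$ to $f$, then compose with $\vp$, then apply $V^{-1}$. Writing $s = J(z)$, the composition part produces $f(J^{-1}(\vp(J(z)))) = f(\Phi(z))$ by the very definition $\Phi = J^{-1}\circ\vp\circ J$, which immediately yields the factor $(C_\Phi f)(z)$. The surviving weight is a product of $(1+z)^{-2/p}$ coming from $V^{-1}$, a factor $(i + \vp(J(z)))^{-2/p}$ coming from $V$ evaluated at the image point $\vp(s)$, and the constants $(2i)^{2/p}\pi^{1/p}$ and $\pi^{-1/p}$, which combine to leave $(2i)^{2/p}(1+z)^{-2/p}(i+\vp(J(z)))^{-2/p}$.

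The key step is to re-express $i + \vp(J(z))$ in terms of $\Phi(z)$. From the M\"obius map $J(w) = i(1-w)/(1+w)$ one gets the elementary identity $i + J(w) = 2i/(1+w)$; applying this with $w = \Phi(z)$, and using $\vp(J(z)) = J(\Phi(z))$ (again from the definition of $\Phi$), gives $i + \vp(J(z)) = 2i/(1+\Phi(z))$. Substituting this collapses the weight: the $(2i)^{2/p}$ factors cancel and one is left with exactly $((1+\Phi(z))/(1+z))^{2/p}(C_\Phi f)(z) = (L_\Phi f)(z)$, as required.

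The main point requiring care is the consistency of the fractional powers $(\cdot)^{2/p}$, since these are only defined up to a choice of branch. Here the geometry saves us: as $\vp(J(z)) \in \C^+$ its imaginary part is positive, so $i + \vp(J(z))$ lies in the open upper half-plane, while $1 + \Phi(z)$ lies in the open right half-plane because $\Phi(z) \in \D$; on both regions the principal branch of $(\cdot)^{2/p}$ is analytic and single-valued, so the scalar identity lifts to the branches fixed in the definition of $V$ without an extraneous root of unity. Finally, I would note that the entire calculation uses only pointwise values of the functions together with the M\"obius identity, never analyticity, so the identical argument gives the unitary equivalence of $C_\vp$ and $L_\Phi$ as operators on $L^p(\R)$ and $L^p(\T)$.
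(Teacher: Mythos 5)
Your proof is correct and takes essentially the same approach as the paper: a direct pointwise computation of $V^{-1}\circ C_\vp\circ V$ from the definitions of $V$, $V^{-1}$ and $\Phi$, with the weight collapsing via the same M\"obius algebra (the paper rewrites $\frac{2i}{i+\vp(J(z))}$ as $1+\Phi(z)$ by splitting the fraction, while you derive the identical identity from $i+J(w)=\frac{2i}{1+w}$ with $w=\Phi(z)$). Your additional remarks on the consistency of the branches of $(\cdot)^{2/p}$ and on the argument applying verbatim to $L^p$ are sound refinements of points the paper leaves implicit.
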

\begin{proof}
Let $f\in H^p(\C^+)$ (or $f\in L^p(\R)$), then
 \begin{align*}
  (V^{-1}\circ C_\vp \circ V f) & = \frac{(2 i )^{2/p}\pi^{1/p}}{(1+z)^{2/p}} (C_\vp \circ V f )(J(z)) \\
    & = \frac{(2 i )^{2/p}\cancel{\pi^{1/p}}}{(1+z)^{2/p}} \cdot \frac{1}{\cancel{\pi^{1/p}}( i +\vp(J(z)))^{2/p}} f(J^{-1}\circ \vp \circ J(z)) \text{.}
\intertext{Combining factors, we get}
& = \left(\frac{1}{1+z}\cdot\left(\cancelto{1}{\frac{ i  + \vp(J(z))}{ i  + \vp(J(z))}} +  \frac{i  - \vp(J(z))}{ i  + \vp(J(z))}\right)\right)^{2/p} f(\Phi(z))\\
    & = \left(\frac{1+\Phi(z)}{1+z}\right)^{2/p} C_\Phi f(z) \text{,}
\end{align*}
as required.
\end{proof}

We now recall Proposition \ref{infmeasure} above. For what follows, we will need the following corollary: 

\begin{cor}\label{a=>b}
 If $r:\C^+\rightarrow\C^+$ is a rational map such that $r(\infty)\neq\infty$, then $C_r$ is not bounded on $L^p(\R)$, or $H^p(\C^+)$ for any $1\le p <\infty$.
\end{cor}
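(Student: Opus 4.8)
The plan is to reduce everything to Proposition \ref{infmeasure}: it suffices to produce a subset of $\R$ of infinite Lebesgue measure on which $r$ is bounded, and the failure of $C_r$ to be bounded on every $L^p(\R)$ and $H^p(\C^+)$ then follows immediately. So the entire task is to convert the hypothesis $r(\infty)\neq\infty$ into a boundedness statement on a large set of the real line.

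First I would write $r=P/Q$ as a quotient of coprime polynomials and translate the hypothesis into a condition on degrees. For a rational function the limit $\lim_{|z|\rightarrow\infty} r(z)$ always exists in the Riemann sphere, and it equals $\infty$ exactly when $\deg P>\deg Q$; hence $r(\infty)\neq\infty$ forces $\deg P\le\deg Q$, so that
\[
 \lim_{|z|\rightarrow\infty} r(z) = L
\]
for some finite $L\in\C$ (the ratio of leading coefficients when the degrees are equal, and $0$ otherwise). In particular, along the real axis $r(x)\rightarrow L$ as $x\rightarrow\pm\infty$. Next I would extract the infinite-measure set: the poles of $r$ are the finitely many zeros of $Q$, all lying in a bounded region, so there is an $N$ with no poles in $\{x\in\R:|x|>N\}$. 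On each of the two rays making up this set, $r$ is continuous and tends to the finite value $L$ at infinity, hence is bounded there, say $|r(x)|<K$ for $|x|>N$. Since $\{x\in\R:|x|>N\}$ has infinite measure, Proposition \ref{infmeasure} applies with $\varphi=r$ and gives the conclusion for both $L^p(\R)$ and $H^p(\C^+)$.

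The argument is essentially bookkeeping once Proposition \ref{infmeasure} is available, so no serious obstacle arises. The only point demanding a little care is the passage from the value $r(\infty)\neq\infty$ to boundedness of $r$ on a genuinely infinite-measure subset of $\R$: one must confirm the degree inequality really yields a finite limit at infinity, and check that deleting the finitely many (bounded) real poles leaves a set that is still of infinite measure and on which the finite limit forces an actual bound. Both are routine, and I would state them briefly rather than labour the estimates.
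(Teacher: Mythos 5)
Your proof is correct and takes essentially the same route as the paper's: both convert $r(\infty)\neq\infty$ into the statement that $r$ tends to a finite limit as $z\rightarrow\pm\infty$, conclude that $r$ is bounded on a set $\{x\in\R:|x|>N\}$ of infinite measure, and then invoke Proposition \ref{infmeasure}. Your write-up merely makes explicit the degree bookkeeping and the exclusion of the finitely many real poles, details the paper leaves implicit.
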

\begin{proof}
 If $r(\infty)\neq\infty$, then $r$ must tend to some finite limit as $z\rightarrow \pm\infty$ (being rational). As such, there must be some $n\in\N$ such that $r$ is bounded on $\{z:|z|>n\}$, which has infinite measure, so by Proposition \ref{infmeasure}, $C_r$ is not bounded on any of the spaces mentioned.
\end{proof}

We now aim to prove that each rational map which does map $\infty$ to itself must give rise to a bounded operator on all the appropriate spaces.

\begin{prop}\label{b=>a}
 Let $r=a/b:\C^+\rightarrow\C^+$ be a rational map written in its lowest terms, and let $r(\infty)=\infty$. Then $C_r$ is bounded on each of the spaces, $H^p(\C^+)$, $L^p(\R)$ for $1\le p<\infty$.
\end{prop}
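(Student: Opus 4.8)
The plan is to exploit the fact that a rational self-map of $\C^+$ is a rational Nevanlinna (Pick) function, which pins down its structure completely. Since $r$ is analytic on $\C^+$ with $\Im r>0$ there, all its poles are real and simple, and the standard representation of such functions gives
\[
 r(z) = bz + a + \sum_{k=1}^n \frac{A_k}{\lambda_k - z},
\]
with $a\in\R$, $b\ge 0$, $\lambda_k\in\R$ and $A_k>0$. The hypothesis $r(\infty)=\infty$ forces $b>0$, since otherwise $r(z)\to a$ as $|z|\to\infty$. First I would extract the two facts that drive everything: on $\R$, away from the poles, $r'(x)=b+\sum_k A_k/(\lambda_k-x)^2\ge b>0$, so $r$ is strictly increasing on each of the $n+1$ intervals into which the poles cut $\R$; and on each such interval $r$ runs from $-\infty$ to $+\infty$. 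Hence $r$ is an $(n+1)$-to-one covering of $\R$ onto itself with no critical points.

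For $L^p(\R)$ I would then argue by change of variables. Splitting $\R$ into the $n+1$ intervals on which $r$ is a monotone bijection onto $\R$, substituting $t=r(x)$ on each, and writing $r^{-1}(t)\cap\R=\{x_1(t),\dots,x_{n+1}(t)\}$, the bound $r'\ge b>0$ gives
\[
 \|C_r f\|_p^p = \int_\R |f(r(x))|^p\,dx = \int_\R |f(t)|^p\, w(t)\,dt,
 \qquad
 w(t) = \sum_{j=1}^{n+1} \frac{1}{|r'(x_j(t))|} \le \frac{n+1}{b}.
\]
Thus $\|C_r\|_{L^p\to L^p}\le ((n+1)/b)^{1/p}$, so $C_r$ is bounded on $L^p(\R)$ for every $1\le p<\infty$.

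For $H^p(\C^+)$ I would pass to the disc through the unitary equivalence of the preceding Lemma, under which $C_r$ becomes $L_\Phi=M_w C_\Phi$ on $H^p(\D)$, where $\Phi=J^{-1}\circ r\circ J$ is an analytic self-map of $\D$ and $M_w$ is multiplication by $w=\left((1+\Phi(z))/(1+z)\right)^{2/p}$. Littlewood's subordination principle guarantees $C_\Phi$ is bounded on $H^p(\D)$, so it remains only to check $w\in H^\infty(\D)$. The boundary point $-1\in\T$ corresponds under $J$ to $\infty$, and since $r(\infty)=\infty$ we get $\Phi(-1)=-1$; a short computation using $r(s)\sim bs$ as $s\to\infty$ shows $(1+\Phi(z))/(1+z)\to 1/b$ as $z\to -1$, so this ratio extends continuously to $\overline{\D}$ and is therefore bounded and analytic on $\D$. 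Hence $w\in H^\infty(\D)$, $M_w$ is bounded, and $C_r$ is bounded on $H^p(\C^+)$.

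I expect the main obstacle to be the structural input: establishing that a rational self-map of $\C^+$ has the Nevanlinna form with real simple poles and $b>0$. The lower bound $r'\ge b$ on $\R$ and the boundedness of $w$ at $-1$ — the two facts that close each half of the argument — are really the same statement about the behaviour of $r$ at $\infty$, and they are precisely what $b>0$ provides. Once that is in hand, the change of variables and the transfer to the disc are routine.
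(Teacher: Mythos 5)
There is a genuine gap, and it sits exactly where you predicted trouble: your structural claim is false. The Nevanlinna form
\[
 r(z) = bz + a + \sum_{k=1}^n \frac{A_k}{\lambda_k - z}, \qquad a\in\R,\ b\ge0,\ \lambda_k\in\R,\ A_k>0,
\]
with real simple poles and real constant term, holds only for rational Pick functions that are \emph{real on the real axis}; for a general rational self-map of $\C^+$ the Herglotz measure can be absolutely continuous rather than atomic. Concretely, $r(z)=z+i$ is a rational self-map of $\C^+$ with $r(\infty)=\infty$ whose ``constant term'' is not real, and $r(z)=z-\frac{1}{z+2i}$ is one whose only pole is at $-2i\notin\R$ (it is a self-map because $\Im r(z)=\Im z+\frac{\Im z+2}{|z+2i|^2}>0$ on $\C^+$). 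For both of these maps $r(\R)\cap\R=\emptyset$, so your $L^p$ argument collapses entirely: there are no intervals on which $r$ is a real-valued monotone bijection of $\R$, the set $r^{-1}(t)\cap\R$ is empty for every $t\in\R$, and $C_r f$ on the boundary must be interpreted via the Poisson extension of $f$, to which your change-of-variables identity does not apply. (When $r$ \emph{is} real on $\R$ your argument is correct and even yields the clean bound $\|C_r\|_{L^p\to L^p}\le ((n+1)/b)^{1/p}$, but that is a proper subclass of the maps covered by the proposition.)

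Your $H^p$ half is much closer to being right, because the only input it genuinely uses is $r(s)=cs+O(1)$ as $s\to\infty$ with $c>0$ real. That fact is true for every rational self-map of $\C^+$ with $r(\infty)=\infty$, but it cannot be quoted from the (false) representation; it requires a direct argument, which is essentially Proposition~\ref{rationalproperties} of the paper: if the numerator's degree exceeded the denominator's by two or more, or if the ratio of leading coefficients were not a positive real, one could exhibit points of $\C^+$ of large modulus mapped into $\C^-$. The paper's own proof of the present proposition sidesteps asymptotic constants altogether: it transfers to the disc through the same unitary equivalence you use, and then proves $\sup_{z\in\D}\bigl|\frac{1+\Phi(z)}{1+z}\bigr|<\infty$ purely by degree counting, using only $\deg a>\deg b$ (immediate from $r(\infty)=\infty$); that single estimate handles $H^p(\C^+)$ and $L^p(\R)$ simultaneously, since the equivalence identifies $C_r$ on both spaces with the same weighted composition operator on the disc. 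To repair your proof, either restrict the change-of-variables track to symbols real on $\R$ and treat the remaining maps separately, or drop the Nevanlinna input and run your disc-side weight computation for all of $H^p$ and $L^p$ at once, justifying $r(s)\sim cs$, $c>0$, by the direct mapping argument rather than by a representation theorem.
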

\begin{proof}
 We recall that
\begin{center}
$C_r$ is bounded on $H^p(\C^+), L^p(\R)$

$\Updownarrow$

$\left(\frac{1+\Phi_r}{1+z}\right)^{2/p} C_{\Phi_r}$ is bounded on $H^p(\D), L^p(\T)$

[where $\Phi_r = J^{-1} \circ r \circ J$]

$\Updownarrow$

$\sup_{\|f\|=1} \left\|\left(\frac{1+\Phi_r(z)}{1+z}\right)^{2/p} C_{\Phi_r} f \right\|_p \le \infty$
\end{center}
Now,
\begin{align*}
\sup_{\|f\|=1} \left\|\left(\frac{1+\Phi_r(z)}{1+z} \right)^{2/p} C_{\Phi_r} f \right\|_p & \le \sup_{\|f\|=1} \left\|\left(\frac{1+\Phi_r(z)}{1+z} \right)^{2/p} \right\|_\infty \cdot \left\|C_{\Phi_r} f\right\|_p \\
& = \left\|\left(\frac{1+\Phi_r(z)}{1+z} \right)^{2/p} \right\|_\infty \cdot \left\| C_{\Phi_r} \right\| \text{.}
\end{align*}
However $\|C_{\Phi_r}\|<\infty$ since all composition operators on the disc are bounded on the relevant spaces, so $C_r$ will be bounded on $H^p(\C^+)$ and $L^p(\R)$, provided
\[
 \left\| \left( \frac{1+J^{-1}\circ r \circ J(z)}{1+z} \right)^{2/p} \right\|_\infty<\infty \text{.}
\]
We note, however, that
\begin{align*}
 \left\| \left( \frac{1+J^{-1}\circ r \circ J(z)}{1+z} \right)^{2/p} \right\|_\infty & = \left\| \frac{1+J^{-1}\circ r \circ J(z)}{1+z} \right\|_\infty^{2/p} \\
 & = \left(\sup_{z\in\D} \left|\frac{1+J^{-1}\circ r \circ J(z)}{1+z} \right|\right)^{2/p} \text{,}
\end{align*}
so $C_r$ will be bounded on all the spaces simultaneously, provided
\[
 \sup_{z\in\D} \left|\frac{1+J^{-1}\circ r \circ J(z)}{1+z} \right| <\infty \text{.}
\]
Since $J^{-1}\circ r \circ J:\D\rightarrow\D$, we have $|J^{-1}\circ r \circ J(z)|<1$, so our inequality is clearly satisfied for $z$ away from $-1$. Hence
\[
 \sup_{z\in\D} \left|\frac{1+J^{-1}\circ r \circ J(z)}{1+z} \right| <\infty \Leftrightarrow \lim_{z\rightarrow-1} \left|\frac{1+J^{-1}\circ r \circ J(z)}{1+z} \right| <\infty \text{,}
\]
where here `$\lim$' denotes the non-tangential limit. Now, making the substitution $z=-k$,
\small
\[
 \lim_{z\rightarrow -1} \left|\frac{1+J^{-1}\circ r \circ J(z)}{1+z} \right| = \lim_{k\rightarrow 1}\left|\frac{\left(1+\frac{ i -r\left( i \frac{1+k}{1-k}\right)}{ i +r\left( i \frac{1+k}{1-k}\right)}\right)}{1-k}\right| = 2 \lim_{k\rightarrow 1}\left|\frac{1}{(1-k)( i +r\left( i \frac{1+k}{1-k}\right)} \right| \text{.}
\]
\normalsize
We recall that $r=a/b$, where $a$ and $b$ are polynomials with no common factors, so
\[
  2 \lim_{k\rightarrow 1}\left|\frac{1}{(1-k)\left( i +r\left( i \frac{1+k}{1-k}\right)\right)} \right| = 2 \lim_{k\rightarrow 1}\left|\frac{b\left( i \frac{1+k}{1-k}\right)}{(1-k)\left( i  b\left( i \frac{1+k}{1-k}\right)+a\left( i \frac{1+k}{1-k}\right)\right)} \right| \text{.}
\]
Making the change of variables $t=\frac{1}{1-k}$, that is $k=1+\frac{1}{t}$, we get
\[
 2 \lim_{k\rightarrow 1}\left|\frac{b\left( i \frac{1+k}{1-k}\right)}{(1-k)\left( i  b\left( i \frac{1+k}{1-k}\right)+a\left( i \frac{1+k}{1-k}\right)\right)} \right| = 2 \lim_{t\rightarrow \infty} \left| t \frac{b\left( -i (2t+1)\right)}{\left( i  b\left( i (2t+1)\right)+a\left( i (2t+1)\right)\right)} \right| \text{.}
\]
If we let $\deg(b)=m$, then the degree of the numerator of the fraction is $m+1$. Since $r(\infty)=\infty$, we must have $\deg(a)>\deg(b)$, so the degree of the denominator of the fraction is greater than or equal to $m+1$, so
\[
 \lim_{t\rightarrow \infty} \left| t \frac{b\left( -i (2t+1)\right)}{( i  b\left( i (2t+1)\right)+a\left( i (2t+1)\right)} \right| < \infty \text{,}
\]
and hence $C_r$ is bounded on each $L^p(\R)$, and each $H^p(\C^+)$.
\end{proof}
\begin{cor}\label{boundedrational}
 For a rational map $r:\C^+\rightarrow\C^+$, $C_r$ is bounded on each $L^p(\R)$, and each $H^p(\C^+)$ if and only if $r(\infty)=\infty$.
\end{cor}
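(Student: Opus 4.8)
The plan is to recognise that this biconditional is nothing more than the synthesis of the two results that immediately precede it, so that no fresh argument is needed; the equivalence splits into its two implications, each of which has already been secured. I would simply stitch together Corollary \ref{a=>b} and Proposition \ref{b=>a}, taking care only that they address the same collection of spaces and the same range $1\le p<\infty$.

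For the forward implication I would proceed by contraposition. Assuming $r(\infty)\neq\infty$, Corollary \ref{a=>b} states outright that $C_r$ fails to be bounded on $L^p(\R)$ and on $H^p(\C^+)$ for every $1\le p<\infty$. Reading this contrapositively, if $C_r$ is bounded on any one of these spaces, then we are forced to conclude $r(\infty)=\infty$. For the reverse implication I would invoke Proposition \ref{b=>a} directly: writing $r=a/b$ in lowest terms, the hypothesis $r(\infty)=\infty$ already delivers boundedness of $C_r$ on each $H^p(\C^+)$ and each $L^p(\R)$ in the stated range.

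Since both halves are already in hand, there is no genuine obstacle at this stage; the entire substance of the corollary is carried by the two cited results, and in particular by the degree argument in Proposition \ref{b=>a} that reduces boundedness to finiteness of a boundary limit. The only thing worth verifying is that the family of spaces and the range of $p$ match across the two statements, which they do, so that the asserted equivalence holds uniformly.
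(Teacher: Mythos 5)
Your proof is correct and matches the paper's intent exactly: the corollary is stated without proof precisely because it is the immediate conjunction of Corollary \ref{a=>b} (whose contrapositive gives the forward direction) and Proposition \ref{b=>a} (which gives the converse). Nothing further is needed.
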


\section{Further observations about rational maps}

\begin{prop}
 Let r be a rational map such that $r(\infty)=\infty$ and $r(\C^+)\subseteq \C^+$. Then both $r^{-1}(\C^+)$ and $r^{-1}(\C^-)$ contain an unbounded component.
\end{prop}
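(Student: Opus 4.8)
My first move would be to dispose of the assertion about $r^{-1}(\C^+)$, which costs nothing. Since $r(\C^+)\subseteq\C^+$ by hypothesis, we have $\C^+\subseteq r^{-1}(\C^+)$; as $\C^+$ is connected it lies inside a single component of $r^{-1}(\C^+)$, and that component contains all of $\C^+$ and is therefore unbounded. The real content of the proposition is thus the statement about $r^{-1}(\C^-)$, and my plan is to read it off from the behaviour of $r$ near $\infty$.

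The set-up I would use is as follows. Writing $r=a/b$ in lowest terms, the hypothesis $r(\infty)=\infty$ forces $\deg a>\deg b$ (a rational map fixes $\infty$ precisely when its numerator has strictly larger degree, as is already used in the proof of Proposition \ref{b=>a}). Setting $k=\deg a-\deg b\ge1$ and letting $c$ be the ratio of the leading coefficients of $a$ and $b$, we have $r(z)\sim cz^{k}$ as $z\to\infty$; crucially, the convergence $r(z)/z^{k}\to c$ is \emph{uniform} in $\arg z$, since $a(z)/z^{\deg a}$ and $b(z)/z^{\deg b}$ each tend to their respective leading coefficients uniformly in direction. Hence, writing $z=Re^{i\phi}$, I would work with the estimate $\arg r(z)=\arg c+k\phi+o(1)$ as $R\to\infty$, uniformly in $\phi$.

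Next I would exploit $r(\C^+)\subseteq\C^+$ to pin down the asymptotics. If $k\ge2$, then as $\phi$ ranges over $(0,\pi)$ the quantity $\arg c+k\phi$ sweeps an interval of length $k\pi\ge2\pi$, and so necessarily enters the range of arguments of $\C^-$; choosing such a $\phi$ with $R$ large would exhibit a point of $\C^+$ mapped into $\C^-$, a contradiction, so $k=1$. Examining the two ends $\phi\to0^+$ and $\phi\to\pi^-$ then forces $\arg c=0$, i.e. $c>0$, since otherwise one of those directions in $\C^+$ would again be carried into $\C^-$. With $k=1$ and $c>0$ established, the same estimate read on the lower half-plane finishes the job: fixing a small $\delta>0$ and taking the closed truncated sub-sector
\[
 S=\{\,Re^{i\phi}:R>N,\ \phi\in[-\pi+\delta,\,-\delta]\,\},
\]
I would choose $N$ large enough (depending only on $\delta$) that every $z\in S$ has $\arg r(z)\in(-\pi,0)$, so $r(z)\in\C^-$. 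As $S$ is connected, unbounded, and contained in $r^{-1}(\C^-)$, this produces the desired unbounded component. (Even without determining $k$ and $c$ one could, for general $k\ge1$, select one of the $k$ angular sectors on which $cz^{k}$ lands in $\C^-$ and run the identical argument.)

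The step I expect to be the main obstacle is the uniformity in $\arg z$ of the expansion $r(z)=cz^{k}\bigl(1+o(1)\bigr)$: it is exactly this uniformity that upgrades ``some large-modulus points map into $\C^-$'' to ``an entire truncated sector maps into $\C^-$'', and hence yields a genuinely \emph{connected} unbounded subset rather than merely an unbounded set of points. Related care is needed near the boundary rays $\phi=0$ and $\phi=-\pi$, which is why I would keep $S$ bounded away from $\R$ by the cushion $\delta$; one should also note that the finitely many poles of $r$ lie in a bounded set, so $S$ avoids them once $N$ is large and $r$ is genuinely analytic there.
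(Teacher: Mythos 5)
Your proof is correct, but it takes a genuinely different route from the paper's. The paper argues softly: it conjugates by $h(z)=1/z$ to move $\infty$ to $0$, notes $hrh(0)=0$ and $hrh(\C^-)\subseteq\C^-$, and applies the open mapping theorem to conclude that every neighbourhood $B_K$ of $0$ contains an open set sent into $\C^+$ by $hrh$ — equivalently, $r$ sends points of arbitrarily large modulus into $\C^-$ — and then invokes the fact that $r^{-1}(\C^-)$ has only finitely many components (because $r$ is rational) to conclude by pigeonhole that one of them must be unbounded. You instead argue via leading-term asymptotics, $r(z)\sim cz^{k}$ uniformly in direction, use $r(\C^+)\subseteq\C^+$ to pin down $k=1$ and $c>0$, and then exhibit an explicit truncated sector in the lower half-plane that $r$ maps into $\C^-$. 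The trade-offs are real: the paper's argument needs no asymptotic classification, but it only produces an unbounded \emph{set} of preimage points and must use the finiteness-of-components fact to upgrade this to an unbounded \emph{component}; your sector is already connected and unbounded, so no such upgrade is needed, and (as your parenthetical observes) the $\C^-$ half of the statement then requires only $r(\infty)=\infty$, not $r(\C^+)\subseteq\C^+$. The cost of your route is that the facts $k=1$ and $c>0$ duplicate work the paper does separately and independently in Proposition \ref{rationalproperties}(i)--(ii) — indeed your contradiction arguments there are essentially the paper's own proofs of those items — whereas the paper's proof of the present proposition deliberately avoids needing them.
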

\begin{proof}
 The fact that $r^{-1}(\C^+)$ contains such a component is trivial, since $\C^+\subseteq r^{-1}(\C^+)$. For $r^{-1}(\C^-)$, we observe the following:

Let $h(z)=\frac{1}{z}=h^{-1}(z)$. Consider the mapping $hrh$, it is easy to see that $r(\infty)=\infty$ if and only if $hrh(0)=0$. Let $A_K$ be the region $\{z:|z|>K\}$. Since $r(\infty)=\infty$, $r(A_K)\subseteq A_K$ for sufficiently large $K$. Similarly, if $B_K=\{z:h(z)\in A_K\}$ then $hrh(B_K)\subseteq B_K$ for sufficiently large $K$. Moreover, $r(\C^+)\subseteq\C^+$, so $hrh(\C^-)\subseteq\C^-$.
\begin{center}
\begin{pspicture}(12,5)
 \pscircle[linewidth=1pt,linestyle=dashed,fillstyle=solid,fillcolor=white](3,3){1.5}
 \pscircle[linewidth=1pt,linestyle=dashed,fillstyle=solid,fillcolor=white](9,3){1.5}
 \psline[linewidth=1pt]{->}(5,3)(7,3)
 \pswedge[linecolor=darkgray,linewidth=1pt,linestyle=dashed,fillcolor=lightgray,fillstyle=solid](3,3){1.5}{180}{360}
 \cput[linestyle=none](6,3.2){$hrh$}
 \psline[linecolor=black,linewidth=1pt,linestyle=dashed](7.5,3)(10.5,3)
 \psccurve[linecolor=gray,linewidth=1pt,linestyle=dashed,fillcolor=lightgray,fillstyle=solid](9,3)(9.2,2.4)(10,2.5)(9.5,2.1)(9.5,1.9)(8.8,2.2)(8.5,2.2)(8.2,2)(8,2.4)(8.5,2.4)
 \pscurve[linecolor=darkgray,linewidth=1pt,linestyle=dashed](8,2.4)(8,2.5)(8.3,3)(8.8,3.3)(8.1,4)(8.9,3.8)(9,4.2)(9.6,4.1)(9.3,3.5)(9.8,3)(10,2.7)(10,2.5)
 \psdots[dotstyle=*](3,3)(9,3)
 \cput[linestyle=none](2.9,3.2){\Small{$0$}}
 \cput[linestyle=none](8.9,3.2){\Small{$0$}}
 \cput[linestyle=none](3,1){$B_K$}
 \cput[linestyle=none](9,1){$B_K$}
\end{pspicture}
\end{center}
Now $B_K$ is an open neighbourhood of $0$, and $hrh$ is an open mapping, with $hrh(0)=0$, so $hrh(B_K)$ is an open neighbourhood of $0$. As such, $hrh(B_K)\not\subseteq\C^-$, and there is at least one point in $B_K$ (indeed, an open subset of $B_K$) which is mapped to $\C^+$ by $hrh$.

Thus, there is an open subset of $A_K$ mapped to $\C^-$ by $r$, but this is true for all sufficiently large $K$, so there are points of arbitrarily large modulus sent to $\C^-$. Since $r$ is rational, $r^{-1}(\C^-)$ has at most finitely many components, so $r^{-1}(\C^-)$ must have an unbounded component.
\end{proof}
\begin{prop}\label{rationalproperties}
 Let $r$ be a rational map such that $r(\infty)=\infty$ and $r(\C^+)\subseteq\C^+$. If $r$ is of the form
\[
 r(z) = \frac{a_nz^n+\ldots+a_1z+a_0}{b_mz^m+\ldots+b_1z+b_0}
\]
with $a_n,b_m\neq0$, then
\begin{enumerate}
 \item[(i)] $n=m+1$,
 \item[(ii)] $\frac{a_n}{b_m}\in\R$, and in particular, $\frac{a_n}{b_m}>0$,
 \item[(iii)] $\Im(\frac{a_0}{b_0})\ge0$.
\end{enumerate}
\end{prop}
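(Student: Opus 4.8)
The plan is to treat (i) and (ii) together by analysing $r$ asymptotically along rays into $\infty$, and to obtain (iii) from a boundary-continuity observation. For (i), note first that since $r$ is rational, $r(\infty)=\infty$ holds precisely when $\deg a>\deg b$, i.e. $n\ge m+1$; write $k=n-m\ge 1$ and $c=a_n/b_m$. Since $a(z)/z^{n}\to a_n$ and $b(z)/z^{m}\to b_m$ as $|z|\to\infty$, we have $r(z)=c\,z^{k}\bigl(1+o(1)\bigr)$, with the $o(1)$ uniform in $\arg z$. The idea is that, because $r$ must keep the whole upper half-plane inside $\C^+$, the leading monomial $c\,z^{k}$ cannot wind too much.

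Concretely, I would fix $\theta\in(0,\pi)$ and let $z=Re^{i\theta}$ with $R\to\infty$. Then $\arg r(z)=\gamma+k\theta+o(1)$ where $\gamma=\arg c$, and since $r(z)\in\C^+$ forces $\Im r(z)>0$, letting $R\to\infty$ gives $\sin(\gamma+k\theta)\ge 0$ for every $\theta\in(0,\pi)$. The map $\theta\mapsto\gamma+k\theta$ traces a connected interval of length $k\pi$ that must lie inside the set $\{x:\sin x\ge0\}$, whose connected components are closed intervals of length $\pi$. A connected set lies in a single component, so $k\pi\le\pi$, forcing $k=1$ and hence $n=m+1$, which is (i). With $k=1$ the open interval $(\gamma,\gamma+\pi)$ has length exactly $\pi$ and must sit inside a component $[2\pi j,\pi+2\pi j]$, which is only possible if $\gamma\equiv0\pmod{2\pi}$; thus $c=a_n/b_m$ is real and positive, giving (ii).

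For (iii), I would simply observe that $a_0/b_0=a(0)/b(0)=r(0)$ whenever $b_0\ne0$, and that $0$ lies on $\partial\C^+=\R$. Since $r$ has no poles in $\C^+$ (it is analytic there, mapping into $\C^+$) and $b_0\neq 0$ means $r$ is continuous at $0$, the value $r(0)$ is a limit of points $r(z)\in\C^+$ as $z\to0$; as $\overline{\C^+}=\{w:\Im w\ge0\}$ is closed, $\Im r(0)\ge0$, i.e. $\Im(a_0/b_0)\ge0$. If $b_0=0$ then $0$ is a boundary pole and $r(0)=\infty$, so the statement is read as the degenerate boundary value at $\infty$.

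The routine parts here are the degree count and the boundary-continuity step; the one place needing care is the asymptotic argument for (i)--(ii). The uniformity of the $o(1)$ term already removes most of the analytic worry, so the main obstacle is the mod-$2\pi$ bookkeeping: justifying that the inequality $\sin(\gamma+k\theta)\ge0$ survives the limit $R\to\infty$ for each fixed $\theta$, and that a connected arc contained in $\{\sin\ge0\}$ must lie in a single length-$\pi$ component. Once that is pinned down, both $k=1$ and the sharp conclusion $c>0$ drop out of the single length-$\pi$ constraint.
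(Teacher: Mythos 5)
Your proof is correct, and it rests on the same core mechanism as the paper's: both arguments examine the leading behaviour $r(Re^{i\theta})\approx |a_n/b_m|\,R^{k}e^{i(\gamma+k\theta)}$ (where $k=n-m$, $\gamma=\arg(a_n/b_m)$) along rays into $\infty$ for (i) and (ii), and use continuity of $r$ at the boundary point $0$ for (iii). The difference is in how the asymptotic constraint is exploited. The paper derives a contradiction from a single well-chosen ray: for (i) it picks $\theta$ with $\gamma+k\theta=3\pi/2$, and for (ii) it picks $\theta=\pi-\gamma/2$, in each case landing $r(Re^{i\theta})$ in $\C^-$. You instead record the constraint $\sin(\gamma+k\theta)\ge0$ for every $\theta\in(0,\pi)$ and finish with a connectedness argument: the open interval $(\gamma,\gamma+k\pi)$ must lie in a single component $[2\pi j,\,2\pi j+\pi]$ of $\{x:\sin x\ge0\}$, which forces $k=1$ and then $\gamma\equiv 0\pmod{2\pi}$ in one stroke. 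Your version buys some robustness: the paper's prescribed angle for (i) need not lie in $(0,\pi)$ for every value of $\gamma$ (for instance $\gamma\in(3\pi/2,2\pi)$ with $k=2$ gives $\theta<0$), so its choice implicitly requires adjusting the target direction by a multiple of $2\pi$, whereas your formulation handles all residues of $\gamma$ uniformly; you also deal explicitly with the degenerate case $b_0=0$ in (iii), which the paper passes over silently. Both proofs are equally elementary; the paper's is shorter on the page, while yours avoids any case-by-case choice of direction.
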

\begin{proof}\text{}

 (i) For $|z|$ large enough, $r(z)\approx\frac{a_n}{b_n}z^{n-m}$. Taking $\frac{a_n}{b_m}=ce^{i\gamma}$, we set
\[
 \theta = \frac{\frac{3\pi}{2}-\gamma}{n-m}\text{.}
\]
If $n-m\ge2$, then $\theta\in\C^+$, but for sufficiently large $k$,
\[
r(ke^{i\theta})\approx ck^{n-m}e^{\frac{3\pi i}{2}} \in \C^- \text{,}
\]
so $r(\C^+)\not\subseteq\C^+$, which is a contradiction.

(ii) Since $n=m+1$, we have $r(z)\approx \frac{a_n}{b_m}z$, for sufficiently large $z$. Suppose $\frac{a_n}{b_m}\not\in\R^+$. Then
\[
 \frac{a_n}{b_m}=ce^{i\gamma} \text{,}
\]
where $\gamma\neq 0 \text{ }(\text{mod }2\pi)$, that is, $\gamma\in(0,2\pi)$. We have that $r(ke^{i\theta})\approx cke^{i(\theta+\gamma)}$, so setting $\theta=\pi-\frac{\gamma}{2}$, we get
\[
 r(ke^{i\theta})\approx  cke^{i(\pi+\frac{\gamma}{2})} \in\C^- \text{,}
\]
but $ke^{i(\pi-\frac{\gamma}{2})}\in\C^+$, so $r(C^+)\not\subseteq\C^+$, which is a contradiction.

(iii) For $z$ sufficiently small, we have
\[
 r(z)\approx \frac{a_0}{b_0} \text{.}
\]
If $\Im(\frac{a_0}{b_0})<0$, then 
\[
r(ke^{\frac{i\pi}{2}})\approx\frac{a_0}{b_0} \in\C^-\text{,}
\]
for $k$ sufficiently small, but $ke^{\frac{i\pi}{2}}\in\C^+$, so $r(\C^+)\not\subseteq\C^+$, which is a contradiction.
\end{proof}

Altogether, this gives us a refinement of Corollary \ref{boundedrational}, namely:

\begin{cor}\label{boundedrational2}
 For a rational map $r:\C^+\rightarrow\C^+$, $C_r$ is bounded on each $L^p(\R)$, and each $H^p(\C^+)$ if and only if the degree of the numerator of $r$ is precisely $1$ larger than the degree of the denominator of $r$.
\end{cor}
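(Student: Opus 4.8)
The plan is to obtain this corollary simply by chaining together the two results that immediately precede it, so that essentially no new work is required. Corollary \ref{boundedrational} already tells us that, for a rational self-map $r:\C^+\rightarrow\C^+$, the operator $C_r$ is bounded on every $L^p(\R)$ and every $H^p(\C^+)$ precisely when $r(\infty)=\infty$. It therefore suffices to show that, for such an $r$, the condition $r(\infty)=\infty$ is equivalent to the numerator of $r$ having degree exactly one greater than the denominator. Writing $r=a/b$ in lowest terms with $\deg a=n$ and $\deg b=m$, I will work throughout with these two degrees.

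For the forward implication I would argue as follows: if $C_r$ is bounded, then Corollary \ref{boundedrational} gives $r(\infty)=\infty$; since $r$ is by hypothesis a self-map of $\C^+$, we have $r(\C^+)\subseteq\C^+$, so Proposition \ref{rationalproperties}(i) applies verbatim and forces $n=m+1$. For the converse I would invoke the elementary asymptotic behaviour of a rational function: the limit of $r(z)$ as $|z|\to\infty$ is governed entirely by the sign of $n-m$, so that $\lim_{|z|\to\infty}r(z)=\infty$ exactly when $n>m$. In particular $n=m+1$ yields $r(\infty)=\infty$, and a second appeal to Corollary \ref{boundedrational} then delivers boundedness of $C_r$ on all the relevant spaces simultaneously.

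The only point needing any care — and it is a minor one — is the apparent mismatch between the two directions: boundedness corresponds via Corollary \ref{boundedrational} to $r(\infty)=\infty$, which in degree terms is merely $n>m$, whereas the statement asserts the sharper equality $n=m+1$. This gap is bridged entirely by the geometric hypothesis $r(\C^+)\subseteq\C^+$, which is built into our standing assumption that $r$ is a self-map of $\C^+$: Proposition \ref{rationalproperties}(i) shows precisely that this constraint rules out $n-m\ge2$, so that $n>m$ is automatically upgraded to $n=m+1$. Consequently there is no substantial obstacle to the proof; the genuine content has already been expended in establishing Corollary \ref{boundedrational} and Proposition \ref{rationalproperties}, and the corollary is little more than their combination.
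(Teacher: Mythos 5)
Your proposal is correct and matches the paper's own (implicit) argument exactly: the paper presents this corollary as an immediate consequence of Corollary \ref{boundedrational} combined with Proposition \ref{rationalproperties}(i), which is precisely the chaining you describe, including the observation that the self-map hypothesis upgrades $n>m$ to $n=m+1$.
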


\section{A note on maps which are quotients of linear combinations of powers of $z$}
A slightly larger class of function which are of interest is the following: we denote by $QLP(A)$ the collection of maps from $A$ to $A$ which are quotients of linear combinations of powers of $z$. That is, all those maps of the form
\[
 \vp(z) = \frac{\lambda_1 z^{a_1} + \lambda_2 z^{a_2} + \ldots + \lambda_m z^{a_m}}{\mu_1 z^{b_1} + \mu_2 z^{b_2} + \ldots + \mu_n z^{b_n}} \text{,}
\]
where each $a_i$ and each $b_j$ is a non-negative real number. We assume without loss of generality that the powers $a_i$, and $b_i$ are written in descending order. A number of the methods we have used so far to work with rational maps will also work for these functions, and we present the results for completeness.

We note that each map $\vp\in QLP(\C^+)$ has a well-defined (possibly infinite) limit as $|z|\rightarrow\infty$, so by the same argument used in Corollary \ref{a=>b}, for such a $C_\vp$ to be bounded, we must have
\[
 \lim_{|z|\rightarrow\infty} \vp (z) = \infty \text{,}
\]
that is to say we must have $a_1>b_1$. Indeed more than this, we have the following:

\begin{prop}
 If $\vp\in QLP(\C^+)$, given by
\[
\vp(z) = \frac{\lambda_1 z^{a_1} + \lambda_2 z^{a_2} + \ldots + \lambda_m z^{a_m}}{\mu_1 z^{b_1} + \mu_2 z^{b_2} + \ldots + \mu_n z^{b_n}}
\]
is such that $a_1-b_1<1$, then $\vp$ does not give rise to a bounded compostion operator on any $L^p(\R)$, or $H^p(\C^+)$ for any $1\le p<\infty$.
\end{prop}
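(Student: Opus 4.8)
The plan is to argue by contradiction, using \emph{dilated} versions of the test functions $f_p$ and $g_p$ that already appear in the proof of Proposition \ref{infmeasure}. First I would dispose of the degenerate case. By the discussion immediately preceding the statement (the same argument as in Corollary \ref{a=>b}), boundedness already forces $a_1>b_1$; and if $a_1\le b_1$ then $\varphi$ has a finite limit as $|z|\to\infty$, hence is bounded on $\{t\in\R:|t|>n\}$, a set of infinite measure, so $C_\varphi$ is unbounded by Proposition \ref{infmeasure}. Thus I may assume $0<a_1-b_1<1$. The key fact to record is the boundary asymptotic: restricting to $t>0$ (which sidesteps any branch ambiguity in the non-integer powers $t^{a_i}$), the leading terms dominate and $|\varphi(t)|\sim |\lambda_1/\mu_1|\,t^{a_1-b_1}$ as $t\to+\infty$.

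The point to stress is that, unlike the earlier corollaries, one cannot apply Proposition \ref{infmeasure} to $\varphi$ directly: here $|\varphi(t)|\to\infty$, so $\varphi$ is bounded on no set of infinite measure. Instead I would introduce a scaling. For $L^p(\R)$ set $f^{(R)}(t)=f_p(t/R)$, and for $H^p(\C^+)$ set $g^{(R)}(z)=g_p(z/R)$, with $f_p,g_p$ as in Proposition \ref{infmeasure}. A change of variables gives the clean dilation scaling $\|f^{(R)}\|_p^p=R\,\|f_p\|_p^p$ and $\|g^{(R)}\|_p^p=R\,\|g_p\|_p^p$.

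Next I would bound $\|C_\varphi f^{(R)}\|_p$ and $\|C_\varphi g^{(R)}\|_p$ from below. Since $C_\varphi f^{(R)}(t)=(1+(|\varphi(t)|/R)^{2/p})^{-1}$ and $|C_\varphi g^{(R)}(t)|^p=|i+\varphi(t)/R|^{-2}$, a direct estimate shows both integrands exceed a fixed positive constant on the sublevel set $E_R=\{t>0:|\varphi(t)|<R\}$ (for the $H^p$ case one expands $|i+\varphi(t)/R|^2$ and uses $|\varphi(t)|<R$). Hence each norm is $\gtrsim m(E_R)^{1/p}$. Assuming $C_\varphi$ bounded guarantees $C_\varphi g^{(R)}\in H^p$, so its $H^p$ norm really is this boundary integral. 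From the asymptotic above, $|\varphi(t)|<R$ for all large $t$ up to roughly $(R/|\lambda_1/\mu_1|)^{1/(a_1-b_1)}$, so $m(E_R)\gtrsim R^{1/(a_1-b_1)}$.

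Finally I would compare the two estimates. If $C_\varphi$ had norm $M$, then $\|C_\varphi f^{(R)}\|_p\le M\|f^{(R)}\|_p\sim M R^{1/p}$ (and likewise for $g^{(R)}$ on $H^p$), while the lower bound gives $\|C_\varphi f^{(R)}\|_p\gtrsim R^{1/(p(a_1-b_1))}$. Dividing yields $R^{\frac1p(\frac{1}{a_1-b_1}-1)}\lesssim M$ for every $R$, and since the hypothesis $a_1-b_1<1$ makes the exponent strictly positive, letting $R\to\infty$ produces a contradiction. The main obstacle is not any single computation but getting the two competing powers of $R$ right — the sublevel-set growth $R^{1/(a_1-b_1)}$ against the dilation scaling $R$ of the test-function norm — and verifying that it is precisely the condition $a_1-b_1<1$ that tips the balance (consistent with the fact that degree difference $1$, i.e.\ $a_1-b_1=1$, is the boundedness threshold of Corollary \ref{a=>b}). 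The boundary asymptotic and the pointwise lower bounds on the integrands are then routine.
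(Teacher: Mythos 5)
Your proof is correct, but it takes a genuinely different route from the paper's. The paper stays with a \emph{single} test function and tunes its decay exponent: since $a_1-b_1<1$ one can choose $\ve>0$ with $a_1-b_1<\frac{1}{1+\ve}$, and then $f_{p,\ve}(z)=\bigl(1+|z|^{(1+\ve)/p}\bigr)^{-1}$ lies in $L^p(\R)$ (with $(i+z)^{-(1+\ve)/p}$ serving for $H^p$), while for large $|z|$ the image $C_\vp f_{p,\ve}$ is bounded below by a multiple of $\bigl(1+|z|^{1/p}\bigr)^{-1}$, which is not in $L^p$; so $C_\vp$ fails even to map the space into itself, no norm comparison is needed, and this choice of exponent absorbs the degenerate case $a_1\le b_1$ automatically. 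You instead keep the exponent of Proposition \ref{infmeasure} fixed and tune the \emph{scale}, playing the dilation growth $\|f^{(R)}\|_p\sim R^{1/p}$ against the sublevel-set growth $\|C_\vp f^{(R)}\|_p\gtrsim R^{1/(p(a_1-b_1))}$, with $a_1-b_1<1$ being exactly what makes the ratio blow up as $R\to\infty$; this forces you to dispose of $a_1\le b_1$ separately (which you do correctly, via Proposition \ref{infmeasure}), and to insist on $C_\vp g^{(R)}\in H^p$ before equating the $H^p$ norm with the boundary integral (which you also do). Both arguments rest on the same boundary asymptotic $|\vp(t)|\sim|\lambda_1/\mu_1|\,t^{a_1-b_1}$ on $t>0$. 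What your route buys is a quantitative statement --- the operator norm must exceed a positive power of $R$ for every $R$ --- which exhibits $a_1-b_1=1$ as the precise threshold, consistent with the boundedness results surrounding this proposition; the paper's route is shorter and produces one explicit function whose image leaves the space.
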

\begin{proof}
 Let $1\le p<\infty$, and let $\varepsilon>0$. Then the function $f_{p,\varepsilon}$ given by
\[
 f_{p,\varepsilon}(z) = \frac{1}{1+|z|^{\frac{1+\varepsilon}{p}}}
\]
is in $L^p(\R)$.
Let us suppose that $\vp\in QLP(\C^+)$, with $a_1-b_1<1$. Then in particular, $a_1-b_1<\frac{1}{1+\varepsilon}$ for some $\varepsilon>0$. Now
\[
 \left|C_\vp f_{p,\varepsilon}(z)\right|  = \left|\frac{1}{1+\left|\frac{\lambda_1 z^{a_1} + \ldots + \lambda_m z^{a_m}}{\mu_1 z^{b_1} + \ldots + \mu_n z^{b_n}}\right|^\frac{1+\varepsilon}{p}}\right| \ge \left|\frac{1}{1+\left(\frac{|\lambda_1|}{|\mu_1|}|z|^\frac{1}{\cancel{1+\varepsilon}}\right)^\frac{\cancel{1+\varepsilon}}{p}}\right| \text{,}
\]
for sufficiently large $z$. This is clearly not an $L^p(\R)$ function, and so $C_\vp$ is not bounded on $L^p(\R)$. We note again, that much as with rational functions, the map
\[
 f(z) = \frac{1}{(i+z)^{\frac{1+\ve}{p}}}
\]
will do for the $H^p$ case.
\end{proof}

So if $a_1-b_1<1$, $C_\vp$ cannot be bounded. It remains only to show that if $a_1-b_1\ge1$, then $C_\vp$ must be bounded.
\begin{prop}
 Let $\vp\in QLP(\C^+)$, with representation
\begin{equation}\label{QLP}
 \vp(z) = \frac{\lambda_1 z^{a_1} + \lambda_2 z^{a_2} + \ldots + \lambda_m z^{a_m}}{\mu_1 z^{b_1} + \mu_2 z^{b_2} + \ldots + \mu_n z^{b_n}} \text{,} 
\end{equation}
moreover, let $a_1-b_1\ge 1$. Then $\vp$ gives rise to a bounded composition operator on each of the spaces $H^p(\C^+)$, $L^p(\R)$ for $1\le p<\infty$.
\end{prop}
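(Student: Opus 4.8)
The plan is to mirror the strategy of Proposition \ref{b=>a}, exploiting the unitary equivalence lemma to transfer the problem to the disc. Writing $\Phi = J^{-1}\circ\vp\circ J$, the operator $C_\vp$ is unitarily equivalent to the weighted composition operator $f\mapsto \left(\frac{1+\Phi}{1+z}\right)^{2/p} C_\Phi f$ on $H^p(\D)$ (resp.\ $L^p(\T)$). Since every composition operator $C_\Phi$ is bounded on these disc spaces, $C_\vp$ is bounded as soon as the weight $\left(\frac{1+\Phi}{1+z}\right)^{2/p}$ lies in $L^\infty$, and by the same reasoning as before this reduces to showing
\[
 \sup_{z\in\D}\left|\frac{1+\Phi(z)}{1+z}\right| < \infty \text{.}
\]

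Rather than substituting $z=-k$ directly, I would pass to the half-plane variable $s=J(z)$, which runs over all of $\C^+$ as $z$ runs over $\D$. A short computation using $1+z = \frac{2i}{i+s}$ and $1+\Phi(z) = \frac{2i}{i+\vp(s)}$ gives
\[
 \left|\frac{1+\Phi(z)}{1+z}\right| = \left|\frac{i+s}{i+\vp(s)}\right| = \frac{|s+i|}{|\vp(s)+i|} \text{,}
\]
so the task becomes bounding $\frac{|s+i|}{|\vp(s)+i|}$ over $s\in\C^+$. Because $\vp$ maps $\C^+$ into itself, $\Im\vp(s)\ge 0$, whence $|\vp(s)+i|\ge \Im\vp(s)+1\ge 1$ and in particular $\vp(s)+i$ never vanishes; thus on any bounded region the quotient is finite, and the only question is its behaviour as $|s|\to\infty$.

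For the asymptotic analysis I would use that $\vp$ is a quotient of linear combinations of real powers with exponents in descending order, so that for $|s|=r$ large the leading terms dominate: $|\lambda_1 s^{a_1}+\cdots+\lambda_m s^{a_m}|\ge \tfrac12|\lambda_1|r^{a_1}$ and $|\mu_1 s^{b_1}+\cdots+\mu_n s^{b_n}|\le 2|\mu_1|r^{b_1}$, uniformly in $\arg s\in(0,\pi)$. This yields $|\vp(s)|\ge C\, r^{a_1-b_1}$ for some constant $C>0$ and all large $r$, hence $|\vp(s)+i|\ge \tfrac{C}{2}\, r^{a_1-b_1}$, while $|s+i|\le 2r$. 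Therefore
\[
 \frac{|s+i|}{|\vp(s)+i|} \le \frac{4}{C}\, r^{1-(a_1-b_1)} \text{,}
\]
which stays bounded precisely because $a_1-b_1\ge 1$ forces the exponent $1-(a_1-b_1)\le 0$. Combining this with the finiteness on bounded regions gives a finite supremum, and the boundedness of $C_\vp$ on every $H^p(\C^+)$ and $L^p(\R)$ follows at once.

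The main obstacle is the uniform leading-term domination estimate: with genuinely real (non-integer) exponents one must fix the principal branch of $z^a$ on $\C^+$ and check that the largest exponent really controls the whole sum uniformly over all directions $\arg s\in(0,\pi)$, confirming along the way that the self-map hypothesis rules out poles of $\vp$ in $\C^+$ so that $\vp(s)+i$ is genuinely bounded away from $0$ on bounded sets. Once that estimate is in hand, the remainder is the routine comparison displayed above.
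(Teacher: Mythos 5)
Your proposal is correct, and its first half coincides with the paper's: both transfer $C_\vp$ to the disc via the unitary equivalence lemma and reduce boundedness to the sup-norm bound $\sup_{z\in\D}\left|\frac{1+\Phi(z)}{1+z}\right|<\infty$, since $C_\Phi$ is automatically bounded on the disc spaces. Where you diverge is in how that supremum is controlled. The paper reuses the computation of Proposition \ref{b=>a} verbatim: it observes the weight is harmless away from $z=-1$, replaces the supremum by a radial limit at $-1$, changes variables, and compares the leading powers $b_1+1$ and $a_1$ of the resulting expressions, which is exactly where $a_1-b_1\ge1$ enters. You instead pull the weight back to the half-plane, identify it as $|s+i|/|\vp(s)+i|$, and prove a bound that is uniform over all of $\C^+$: the lower bound $|\vp(s)+i|\ge1$ (from $\Im\vp(s)\ge0$) handles bounded $s$, and the leading-term domination $|\vp(s)|\ge C|s|^{a_1-b_1}$, which is uniform in $\arg s$ because $|s^a|=|s|^a$ exactly for real $a$ on the principal branch in $\C^+$, handles large $|s|$. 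Your route is slightly longer but buys something real: the paper's assertion that the supremum is finite if and only if the non-tangential limit at $-1$ is finite is not justified in general (a finite radial limit does not by itself control an unrestricted supremum), whereas your estimate bounds the quotient along every direction of approach to $\infty$ and so closes that gap. Both arguments share the same implicit normalisation that the top exponents are genuinely leading (equal powers combined, $\lambda_1,\mu_1\neq0$), and the pole issue you flag is vacuous since $\vp$ is by hypothesis an analytic self-map of $\C^+$, so $\vp(s)+i$ never vanishes there.
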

\begin{proof}
 We begin by writing
\begin{align*}
 \sigma(z) & = \lambda_1 z^{a_1} + \lambda_2 z^{a_2} + \ldots + \lambda_m z^{a_m} \\
 \tau(z)   & = \mu_1 z^{b_1} + \mu_2 z^{b_2} + \ldots + \mu_n z^{b_n} \text{,}
\end{align*}
then $\vp=\sigma/\tau$. Using the same argument as in Proposition \ref{b=>a}, we get that $C_\vp$ is bounded, provided
\[
 \lim_{t\rightarrow\infty} \left| t \frac{\tau(-i(2t+1))}{i\sigma(-i(2t+1))+\tau(-i(2t+1))}\right| <\infty \text{.}
\]
The leading power in the numerator of the fraction is $b_1+1$, and in the denominator, it is $a_1$, but $a_1-b_1\ge 1$, so this limit is indeed finite.
\end{proof}
\begin{cor}
 A map of the form \eqref{QLP} induces a bounded composition operator if and only if $a_1-b_1\ge 1$.
\end{cor}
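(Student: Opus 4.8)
The plan is simply to assemble the two preceding propositions, which between them exhaust all cases. Those two results partition the possible values of $a_1-b_1$ into the regime $a_1-b_1<1$ and the regime $a_1-b_1\ge1$, and in each regime the boundedness of $C_\vp$ on the spaces $L^p(\R)$ and $H^p(\C^+)$ (for $1\le p<\infty$) has already been settled. So the corollary is really a statement about combining, rather than proving anything new.

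For the forward implication I would argue by contraposition. The first of the two preceding propositions shows that whenever $a_1-b_1<1$ the operator $C_\vp$ fails to be bounded on every $L^p(\R)$ and every $H^p(\C^+)$; equivalently, if $C_\vp$ is bounded on any one of these spaces, then necessarily $a_1-b_1\ge1$. For the reverse implication, the second proposition supplies exactly the converse: assuming $a_1-b_1\ge1$, it produces a finite value for the relevant limit (via the same change of variables used in Proposition \ref{b=>a}, where the leading power $b_1+1$ of the numerator is dominated by the leading power $a_1$ of the denominator precisely when $a_1-b_1\ge1$) and concludes that $C_\vp$ is bounded on all the spaces simultaneously.

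Since $a_1$ and $b_1$ are real, the hypotheses $a_1-b_1<1$ and $a_1-b_1\ge1$ are mutually exclusive and cover every case, so no further argument is needed: chaining the two propositions yields the stated equivalence. I expect no genuine obstacle here, as the corollary is a bookkeeping consequence of the two propositions. The only point requiring a moment's care is to verify that the non-boundedness result and the boundedness result refer to the same family of spaces and to the same normalisation of the exponents $a_1,b_1$ (namely the leading powers in the descending-order representation \eqref{QLP}), so that their conclusions genuinely concatenate into a single biconditional.
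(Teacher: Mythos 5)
Your proposal is correct and matches the paper exactly: the corollary is stated there without proof, precisely because it is the immediate conjunction of the two preceding propositions (non-boundedness when $a_1-b_1<1$, boundedness when $a_1-b_1\ge 1$), which is what you do. Your contrapositive phrasing of the forward direction and the check that both propositions concern the same spaces and the same descending-order normalisation are exactly the (routine) bookkeeping the paper leaves implicit.
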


\section{An adjoint formula for rational-symbol composition operators}\label{Residue}

We begin by making some elementary calculations concerning $C_\vp^*$. Let $\vp$ be a rational self-map of $\C^+$ with $\vp(\infty)=\infty$, and let $f\in H^2(\C^+)$. If we denote by $k_z$ the reproducing kernel for $H^2$ at $z$ as defined in \eqref{repkernel}, then
\begin{align}
 (C_\vp^*f)(z) & = \left<C_\vp^*f,k_z\right> \notag\\
               & = \left<f,C_\vp k_z\right> \notag\\
               & = \int_\R f(t)\cdot \overline{\frac{1}{2\pi i}}\cdot\overline{\frac{1}{\overline{z}-\vp(t)}} dt \notag\\
               & = \frac{1}{2\pi i}\int_\R \frac{f(t)}{\overline{\vp(t)}-z} dt \label{3} \text{.}
\end{align}
Now let us consider the closed curve $\gamma_\ve$ in $\C^+$ shown below:
\begin{center}
\begin{pspicture}(12,6)
 \psline[linewidth=1pt,linecolor=darkgray]{->}(1,1)(11,1)
 \pswedge[linewidth=2pt](6,1.5){3}{0}{180}
 \cput[linestyle=none](3,3.2){$\gamma_\ve$}
 \psline[linewidth=1pt,linecolor=darkgray]{->}(6,0.5)(6,5)
 \cput[linestyle=none](11.3,1){$\R$}
 \cput[linestyle=none](6,5.3){$\mathbb{I}$}
 \psline[linewidth=1pt]{<->}(7,1)(7,1.5)
 \cput[linestyle=none](7.3,1.25){$\ve$}
 \psline[linewidth=1pt]{<->}(6,1.5)(8.31,3.31)
 \cput[linestyle=none](7.1,2.75){$\frac{1}{\ve}$}
\end{pspicture}
\end{center}

We note that
\[
 \frac{1}{2\pi i} \oint_{\gamma_\ve} \frac{f(t)}{\overline{\vp(\overline{t})}-z} dt = \frac{1}{2\pi i} \int_{(-\frac{1}{\ve},\frac{1}{\ve})} \frac{f(t+\ve i)}{\overline{\vp(\overline{t+\ve i})}-z} dt + \frac{1}{2\pi i} \int_{\kappa_\epsilon} \frac{f(t)}{\overline{\vp(\overline{t})}-z} dt \text{,}
\]
where ${\kappa_\epsilon}$ denotes the semicircular section of $\gamma_\ve$. Taking limits as $\ve\rightarrow0$, we get
\small
\[
 \lim_{\ve\rightarrow0} \frac{1}{2\pi i} \oint_{\gamma_\ve} \frac{f(t)}{\overline{\vp(\overline{t})}-z} dt = \lim_{\ve\rightarrow0} \frac{1}{2\pi i} \int_{(-\frac{1}{\ve},\frac{1}{\ve})} \frac{f(t+\ve i)}{\overline{\vp(\overline{t+\ve i})}-z} dt + \frac{1}{2\pi i}\lim_{\ve\rightarrow0} \int_{\kappa_\epsilon} \frac{f(t+\ve i)}{\overline{\vp(\overline{t+\ve i})}-z} dt \text{.}
\]
\normalsize
Let us now consider the collection of functions $f$ in $H^2$, such that $f=O(z^{-1})$ near $\infty$. For $f$ in this collection,
\[
 \lim_{\ve\rightarrow0} \int_{\kappa_\epsilon} \frac{f(t)}{\overline{\vp(\overline{t})}-z} dt = 0
\]
since $\vp(\infty)=\infty$. As such,
\begin{equation}
  \lim_{\ve\rightarrow0} \oint_{\gamma_\ve} \frac{f(t)}{\overline{\vp(\overline{t})}-z} dt = \lim_{\ve\rightarrow0} \int_{(-\frac{1}{\ve},\frac{1}{\ve})} \frac{f(t+\ve i)}{\overline{\vp(\overline{t+\ve i})}-z} dt = \int_\R \frac{f(t)}{\overline{\vp(t)}-z} dt \label{4}\text{,}
\end{equation}
since $t=\overline{t}$ for $t\in\R$. Combining \eqref{3} and \eqref{4}, we get that
\[
 (C_\vp^*f)(z) = \lim_{\ve\rightarrow0} \frac{1}{2\pi i} \oint_{\gamma_\ve} \frac{f(t)}{\overline{\vp(\overline{t})}-z} dt  = \sum_{\stackrel{t\in\C^+}{\overline{\varphi(\overline{t})}=z}} \Res\left(\frac{f(s)}{\overline{\vp(\overline{s})}-z},s=t\right) \text{,}
\]
by The Residue Theorem. Since the collection of functions which are $O(z^{-1})$ near $\infty$ are dense in $H^2$, we can write any function $f$ in $H^2$ as
\[
 f = \lim_{n\rightarrow\infty} f_n \text{,}
\]
where the $f_n$ are $O(z^{-1})$ near $\infty$. As such,
\begin{align*}
  \lim_{\ve\rightarrow0} \frac{1}{2\pi i} \oint_{\gamma_\ve} \frac{f(t)}{\overline{\vp(\overline{t})}-z} dt \hspace{-3cm} \\
 & = \lim_{\ve\rightarrow0} \frac{1}{2\pi i} \int_{(-\frac{1}{\ve},\frac{1}{\ve})} \frac{f(t+\ve i)}{\overline{\vp(\overline{t+\ve i})}-z} dt + \lim_{n\rightarrow\infty} \cancelto{0}{\lim_{\ve\rightarrow0} \int_{\kappa_\epsilon} \frac{f_n(t)}{\overline{\vp(\overline{t})}-z} dt} \text{,}
\end{align*}
and the same result carries through. This gives us a formula for $C_\vp^*$, namely
\[
 (C_\vp^*f)(z) = \sum_{t\in\C^+\cap\overline{\vp^{-1}(\overline{z})}} \Res\left(\frac{f(s)}{\overline{\vp(\overline{s})}-z},s=t\right) \text{.}
\]
We note further that, if we assume that $\frac{1}{\overline{\vp(\overline{s})}-z}$ has only simple poles, then
\begin{align*}
 (C_\vp^*f)(z) & = \sum_{t\in\C^+\cap\overline{\vp^{-1}(\overline{z})}} \Res\left(\frac{f(s)}{\overline{\vp(\overline{s})}-z},s=t\right) = \sum_{t\in\C^+\cap\overline{\vp^{-1}(\overline{z})}} \lim_{s\rightarrow t} \frac{(s-t)f(s)}{\overline{\vp(\overline{s})}-z} \\
  & = \sum_{t\in\C^+\cap\overline{\vp^{-1}(\overline{z})}} \left(\lim_{s\rightarrow t} \frac{s-t}{\overline{\vp(\overline{s})}-z}\right) \left(\lim_{s\rightarrow t} f(s)\right)  \text{,}
\end{align*}
the last line being possible because $t$ is only a simple pole, and $f$ has no poles, being analytic. Overall, this gives us
\begin{align}
 (C_\vp^*f)(z) & = \sum_{t\in\C^+\cap\overline{\vp^{-1}(\overline{z})}} \left(\lim_{s\rightarrow t} \frac{s-t}{\overline{\vp(\overline{s})}-z}\right)f(t) \label{CphiResidue}\text{,}
\end{align}
which means that $C_\vp^*$ is in fact a so-called `multiple-valued weighted composition operator'. We note finally that, since $\vp$ is rational, it will have only simple poles for all but at most finitely many $z$, hence the above formula is valid except for possibly finitely many $z$, that is to say, it is true on a dense subset of $\C^+$.

\section{Some Examples}

Using the formulae we have derived, we will calculate the adjoints of a number of composition operators. First though, in order to use our Aleksandrov Operator characterisation, we will need to work out how to calculate the AC measures associated with an analytic function $\vp$. The following are the equivalent of a number of useful results on the disc from Saksman's excellent introduction to AC measures \cite{Saksman}.

Let $\mu=\mu^a dm +d\sigma$ be a measure on $\R$, and let us also denote its Poisson extension by $\mu$, that is to say
\[
 \mu(z) = \int_\R P_z(\zeta) d\mu(\zeta)\text{.}
\]
From Theorem 11.24, and a simple extension of Exercise 19, Section 11 in \cite{Rudin}, we have
\begin{equation}\label{eqn1}
\lim_{r\rightarrow 0^+} \mu(b+ i  r) = \begin{cases}
                                           \mu^a & \text{for $m$-almost every $b\in\R$,} \\
                                           \infty & \text{for $\sigma$-almost every $b\in\R$.}
                                          \end{cases}
\end{equation}
Given an analytic self-map, $\varphi$ of $\C^+$, we recall that the Aleksandrov-Clark (AC) measures, $(\mu_\alpha,c_\alpha)$, of $\varphi$ are defined by the formula
\[
 \frac{1}{\pi(1+\alpha^2)} \Re \left(\frac{ i (1+\alpha\varphi(z))}{\varphi(z)-\alpha}\right) = \int_\R P_y (x-t) d\mu_\alpha(t) +c_\alpha y \text{,}
\]
where $z=x+ i  y$.

\begin{prop}\label{ACmeasure1}
 If $\varphi:\C^+\rightarrow\C^+$ is an analytic function, and $\{\mu_\alpha\}$ is its collection of AC measures, then
\[
 \mu_\alpha^a(\zeta) = \begin{cases}
                       \frac{1}{\pi(1+\alpha^2)} \Re \left(\frac{ i (1+\alpha\varphi(\zeta))}{\varphi(\zeta)-\alpha}\right) & \text{if $\varphi(\zeta)\in\C\setminus\R$,} \\
                       0 & \text{if $\varphi(\zeta)\in\R$.}
                      \end{cases}
\]
\end{prop}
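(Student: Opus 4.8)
The plan is to read off the density $\mu_\alpha^a$ from the vertical boundary limit of the Poisson extension of $\mu_\alpha$, using the half-plane Fatou-type statement \eqref{eqn1}. Fix $\alpha\in\R$. By the defining relation,
\[
 v_\alpha(x+iy) = c_\alpha y + \int_\R P_y(x-t)\,d\mu_\alpha(t),
\]
so $v_\alpha$ is exactly the Poisson extension of $\mu_\alpha$ up to the term $c_\alpha y$. Since $c_\alpha y\to 0$ as $y\to 0^+$ (and in any case $c_\alpha=0$ for $m$-almost every $\alpha$ by Lemma \ref{calphazero}), the vertical limit of $v_\alpha$ agrees with that of the Poisson extension of $\mu_\alpha$. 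Hence by \eqref{eqn1}, for $m$-almost every $\zeta\in\R$,
\[
 \mu_\alpha^a(\zeta) = \lim_{r\to 0^+} v_\alpha(\zeta+ir),
\]
and it remains only to evaluate this limit.

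First I would record the algebraic simplification
\[
 v_\alpha(z) = \frac{1}{\pi(1+\alpha^2)}\Re\left(\frac{i(1+\alpha\varphi(z))}{\varphi(z)-\alpha}\right) = \frac{\Im(\varphi(z))}{\pi|\varphi(z)-\alpha|^2},
\]
obtained by multiplying numerator and denominator by $\overline{\varphi(z)}-\alpha$ and taking real parts (this is the same computation already performed when evaluating $A_\varphi$ on a Poisson kernel). This rewriting makes the boundary behaviour transparent, since $\varphi$, being unitarily equivalent to a bounded analytic function on the disc, admits non-tangential, and in particular vertical, boundary values at $m$-almost every point of $\R$.

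Then I split into cases according to the boundary value $\varphi(\zeta)$. If $\varphi(\zeta)\in\C\setminus\R$, then $\Im\varphi(\zeta)>0$ and the real quantity $|\varphi(\zeta)-\alpha|^2$ is strictly positive, so the integrand is continuous at $\zeta$ and the limit is simply $v_\alpha$ evaluated at the boundary value, which gives the first case of the claimed formula. If $\varphi(\zeta)\in\R$ with $\varphi(\zeta)\neq\alpha$, then $\Im\varphi(\zeta+ir)\to 0$ while $|\varphi(\zeta+ir)-\alpha|^2$ tends to a strictly positive number, so the limit is $0$, giving the second case.

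The main obstacle is the remaining set $\{\zeta:\varphi(\zeta)=\alpha\}$, where the simplified expression is a $0/0$ indeterminacy; there the vertical limit is in fact $+\infty$, which by \eqref{eqn1} signals that this set carries the singular part of $\mu_\alpha$ rather than contributing to the density. I would argue that this set is $m$-null: otherwise \eqref{eqn1} would force the limit to be simultaneously finite and equal to $+\infty$ on a set of positive measure, a contradiction; equivalently, a non-constant self-map of $\C^+$ cannot take a fixed real boundary value on a set of positive Lebesgue measure. Consequently it does not affect the $m$-almost-everywhere identity for $\mu_\alpha^a$, and we conclude that $\mu_\alpha^a(\zeta)=0$ for almost every $\zeta$ with $\varphi(\zeta)\in\R$, completing both cases.
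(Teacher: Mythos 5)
Your argument is correct and is essentially the paper's own proof: both identify $\mu_\alpha^a$ with the vertical boundary limit of the Poisson extension of $\mu_\alpha$ via \eqref{eqn1}, and then evaluate that limit through the a.e.-existing boundary values of $\varphi$, splitting into the cases $\varphi(\zeta)\notin\R$ and $\varphi(\zeta)\in\R$. You are in fact more careful than the paper --- the explicit rewriting as $\Im\varphi(z)/(\pi|\varphi(z)-\alpha|^2)$, and the disposal of the exceptional set $\{\zeta:\varphi(\zeta)=\alpha\}$ via the uniqueness (Privalov-type) theorem for boundary values, fill gaps the paper leaves implicit --- though your passing claim that the vertical limit is $+\infty$ on that set is unjustified as stated, and also unnecessary, since the $m$-nullity of the set is all that is required.
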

\begin{proof}
\[
 \int_\R P_y(x-t)d\mu_\alpha(t) +c_\alpha y = \frac{1}{\pi(1+\alpha^2)} \Re\left(\frac{ i (1+\alpha\varphi(z))}{\varphi(z)-\alpha}\right) \text{,}
\]
where
\[
 P_y(x-t) = \frac{1}{\pi} \frac{y}{(x-t)^2 + y} \text{.}
\]
Hence,
\begin{equation}\label{eqn2}
 \underbrace{\frac{1}{\pi}\int_\R \frac{y}{(x-t)^2 + y^2} d\mu_\alpha(t)}_{=\mu_\alpha(x+ i  y)} = \frac{1}{\pi(1+\alpha^2)}\Re\left(\frac{ i (1+\alpha\varphi(z))}{\varphi(z)-\alpha}\right) - c_\alpha y \text{,}
\end{equation}
recalling that $\mu_\alpha$ denotes both a measure and its Poisson extension. We  now take limits as $y\rightarrow0$. The left hand side of \eqref{eqn2} is $\mu_\alpha^a(x)$ ($m$-a.e.) by \eqref{eqn1}, and the right hand side is
\[
 \frac{1}{\pi(1+\alpha^2)}\Re\left(\frac{ i (1+\alpha\varphi(x))}{\varphi(x)-\alpha}\right) =
\begin{cases}
\frac{1}{\pi(1+\alpha^2)}\Re\left(\frac{ i (1+\alpha\varphi(x))}{\varphi(x)-\alpha}\right) & \text{if $\varphi(x)\in\C\setminus\R$,} \\
0 & \text{if $\varphi(x)\in\R$.}
\end{cases}
\]
We note that $m$-almost everywhere equality is the best we could hope for, given that $\mu_\alpha^a$ is an $L^1$ function.
\end{proof}

\begin{prop2}\label{ACmeasure2}
 \[
  \supp(\sigma_\alpha) \subseteq \{x\in\R:\varphi(x)=\alpha\} \text{.}
 \]
\end{prop2}
\begin{proof}
 Suppose $x\in\R$, such that either
\[
 \lim_{y\rightarrow0^+}\vp(x+ i  y) \neq 0 \text{,}
\]
or this limit does not exist. Then there exists some $\ve>0$ and some sequence $y_n\searrow0$ with
\[
 |\vp(x+ i  y_n)| \ge \ve \text{,}
\]
for each $n\in\N$. But
\[
 \mu_\alpha(x+ i  y_n) = \frac{1}{\pi(1+\alpha^2)}\Re\left(\frac{ i (1+\alpha\varphi(x+ i  y))}{\varphi(x+ i  y)-\alpha}\right) - c_\alpha y \text{.}
\]
Since $\vp(x)\neq0$, we have
\[
 \liminf_{y\rightarrow 0+}\mu_\alpha(x+ i  y) < \infty \text{,}
\]
so by \eqref{eqn1}, $\sigma({x})=0$. As such,
\[
 \supp(\sigma_\alpha) \subseteq \{x\in\R:\varphi(x)=\alpha\} \text{.}
\]
\end{proof}

We now move on to our first example, which is the simplest possible case of a composition operator with linear symbol. It is worth noting that by results from section \ref{Rational}, strictly linear maps are the only fractional linear maps which yield bounded composition operators. Moreover, in order to map $\C^+$ into itself, we must have positive real co-efficient of $z$, and a constant term with non-negative imaginary part, by Proposition \ref{rationalproperties}.

\begin{eg}\label{eg1}
We begin by noting that, if $\vp(z)=az+b$, where $a\in\R^+$, and $\Im(b)\ge0$, then
\begin{align*}
 \left<C_\vp f,g\right> & = \int_\R C_\vp f (z) \overline{g(z)} dz = \int_\R f(az+b) \overline{g(z)} dz \text{,}
\intertext{setting $x=az+b$,}
              & = \int_\R f(x) \overline{g\left(\frac{x-b}{a}\right)} \frac{dz}{dx} dx = \int_\R f(x) \overline{\frac{1}{a}g\left(\frac{x-\overline{b}}{a}\right)} dx \text{,}
\end{align*}
since the analytic extension of $g$ to the lower half-plane is $g(\overline{z})$. So the adjoint of $C_\vp$ is the weighted composition operator given by
\begin{equation}\label{Cphieg1}
 C_\vp^* f (z) = \frac{1}{a} f\left(\frac{z-\overline{b}}{a}\right) \text{.}
\end{equation}

The calculation of the same adjoint using AC measures is as follows.

We must split the example into two cases:
\begin{enumerate}
 \item[(i)] the case where $\Im(b)=0$.
 \item[(ii)] the case where $\Im(b)>0$.
\end{enumerate}

\subsection*{(i)} Since $\Im(b)=0$, we have that $\vp(x)\in\R$ for all $x\in\R$, so by Proposition \ref{ACmeasure1}, the absolutely continuous part of each AC measure associated with $\vp$ is identically $0$, or in other words, each measure is entirely singular. By Proposition \ref{ACmeasure2}, the singular part of each $\mu_\alpha$ lives on the preimages of $\alpha$ under $\vp$, so the support of each $\mu_\alpha$ is just the single point $\frac{\alpha-b}{a}$.

In order to determine the value of the point mass at $\frac{\alpha-b}{a}$, we use the defining equation for the AC measures of $\vp$, namely
\[
c_\alpha y + \int_\R P_y(x-t) d\mu_\alpha(t) = \frac{1}{\pi(1+\alpha^2)}\Re \left(\frac{i (1 + \alpha\varphi(x+ i y))}{\varphi(x + i y) - \alpha} \right)\text{,}
\]
where $P_y(x-t)=\frac{1}{\pi}\frac{y}{(x-t)^2+y^2}$. Since $\infty$ has no preimages under $\vp$, $c_\alpha=0$ for each $\alpha$. Setting $x=0$ and $y=1$ in the above, we get
\[
 \cancel{\frac{1}{\pi}}\int_\R \frac{1}{1+t^2} d\mu_\alpha(t) = \cancel{\frac{1}{\pi}}\cdot\frac{1}{1+\alpha^2}\Re \left(\frac{i (1 + \alpha (ai+b))}{(ai + b) - \alpha} \right) \text{.}
\]
As such,
\[
 \frac{1}{1+\left(\frac{\alpha-b}{a}\right)^2}\mu_\alpha\left(\left\{\frac{\alpha-b}{a}\right\}\right)  = \frac{1}{1+\alpha^2}\Re\left(\frac{i(1+\alpha(ai+b))}{ai+b-\alpha}\right)
  = \frac{a}{(b-\alpha)^2+a^2}\text{,}
\]
so
\[
 \mu_\alpha\left(\left\{\frac{\alpha-b}{a}\right\}\right) = \frac{1}{a} \text{.}
\]
Now
\[
 C_\vp^* f (\alpha) = A_\vp f (\alpha) = \int_\R f(t) d\mu_\alpha(t) = \frac{1}{a}f\left(\frac{\alpha-b}{a}\right) \text{,}
\]
which is precisely the same as \eqref{Cphieg1}, since $b\in\R$.

\subsection*{(ii)} Since $\Im(b)>0$, Proposition \ref{ACmeasure2} tells us that each $\mu_\alpha$ is absolutely continuous and Proposition \ref{ACmeasure1} that
\begin{align*}
 \mu_\alpha^a (t) & = \frac{1}{\pi(1+\alpha^2)}\Re\left(\frac{ i (1+\alpha\varphi(t))}{\varphi(t)-\alpha}\right) \\
 & = \frac{1}{\pi(1+\alpha^2)}\Re\left(\frac{ i (1+\alpha(at+b)}{at+b-\alpha}\right)\text{.}
\end{align*}
So
\begin{align*}
 (C_\vp^*f)(\alpha) & = \int_\R f(t) \frac{1}{\pi(1+\alpha^2)}\Re\left(\frac{ i (1+\alpha(at+b)}{at+b-\alpha}\right) dt \\
& = \int_\R f(t) \frac{1}{\pi} \frac{\Im(b)}{\underbrace{(at-\alpha+\Re(b))^2}_{=(\alpha-\Re(b)-at)^2}+\Im(b)^2}dt \text{.}
\end{align*}
Since $\alpha\in\R$, $\Re(\alpha)=\alpha$ and $\Im(\alpha)=0$, so
\begin{align*}
 (C_\vp^*f)(\alpha) & = \int_\R f(t) \frac{1}{\pi} \frac{1}{a}\frac{\Im\left(\frac{\alpha-\overline{b}}{a}\right)}{\left(\Re\left(\frac{\alpha-\overline{b}}{a}\right)-t\right)^2+\Im\left(\frac{\alpha-\overline{b}}{a}\right)^2}dt \\
& = \frac{1}{a}\int_\R f(t) P_{\left(\frac{\alpha-\overline{b}}{a}\right)} (t) dt \\
& = \frac{1}{a}f\left(\frac{\alpha-\overline{b}}{a}\right)\text{,}
\end{align*}
just as in \eqref{Cphieg1}.
\bigskip

Our final method of calculation is the residue formula from section \ref{Residue}. We note that since $\vp$ is linear, it has a well defined inverse, and no repeated roots, so in fact formula \eqref{CphiResidue} describes the adjoint of $\vp$ everywhere. As such, we have
\begin{align*}
(C_\vp^*f)(z) & = \sum_{t\in\C^+\cap\overline{\vp^{-1}(\overline{z})}} \left(\lim_{s\rightarrow t} \frac{s-t}{\overline{\vp(\overline{s})}-z}\right)f(t) \\
 & = \left(\lim_{s\rightarrow \frac{z-\overline{b}}{a}} \frac{s-\frac{z-\overline{b}}{a}}{as+\overline{b}-z}\right)f\left(\frac{z-\overline{b}}{a}\right) \text{.}
\end{align*}
So a simple application of L'H\^opital's Theorem gives us
\[
 (C_\vp^* f)(z) = \frac{1}{a}f\left(\frac{z-\overline{b}}{a}\right) \text{,}
\]
just as in \eqref{Cphieg1}.
\end{eg}

\begin{eg}\label{eg2}
 Let us consider the map
\[
 \vp(z) = z-\frac{1}{z} \text{,}
\]
which we know to give rise to an isometric composition operator on $H^2(\C^+)$ by Proposition 2.1 of \cite{Chalendar03}. We observe that
\[
 \vp^{-1}(z) = \left\{\frac{z\pm\sqrt{z^2+4}}{2}\right\} \text{.}
\]

For calculating the AC measures of $\vp$, we note that, if $d\mu_\alpha=\mu_\alpha^a dm+d\sigma_\alpha$, then:
\begin{enumerate}
 \item[(a)] $z-\frac{1}{z}\in\R$ for all $z\neq0$, so $\mu_\alpha^a(x)=0$ for all $x$,
 \item[(b)] $\sigma_\alpha$ lives on $\{x:\vp(x)=\alpha\}=\{x:x-\frac{1}{x}=\alpha\}=\{\frac{\alpha\pm\sqrt{\alpha^2+4}}{2}\}$.
\end{enumerate}
Moreover, $\mu_0\equiv0$. Setting $x=0$ and $y=1$, we get
\begin{align}
 \frac{1}{\pi}\int_\R \frac{1}{1+t^2}d\mu_\alpha(t) & = \frac{1}{\pi(1+\alpha^2)} \Re\left(\frac{ i (1+\alpha\varphi( i ))}{\varphi( i )-\alpha}\right)  - c_\alpha \text{,} \label{sim1}
\intertext{and setting $x=0$ and $y=2$, we get}
\frac{1}{\pi}\int_\R \frac{1}{4+t^2}d\mu_\alpha(t) & = \frac{1}{\pi(1+\alpha^2)} \Re\left(\frac{ i (1+\alpha\varphi(2 i ))}{\varphi(2 i )-\alpha}\right)  - 2c_\alpha \text{.}\label{sim2}
\end{align}
It's easy to show that $c_\alpha=0$ for all $\alpha$, so solving \eqref{sim1} and \eqref{sim2} as simultaneous equations gives us
\begin{align*}
\sigma_\alpha\left(\left\{\frac{\alpha+\sqrt{\alpha^2+4}}{2}\right\}\right) &= \frac{\sqrt{\alpha^2+4}+\alpha}{2\sqrt{\alpha^2+4}}\quad \intertext{and} \quad
\sigma_\alpha\left(\left\{\frac{\alpha-\sqrt{\alpha^2+4}}{2}\right\}\right) &= \frac{\sqrt{\alpha^2+4}-\alpha}{2\sqrt{\alpha^2+4}}\text{.}
\end{align*}

So
\[
 (C_\vp^*f)(\alpha) = \frac{\sqrt{\alpha^2+4}+\alpha}{2\sqrt{\alpha^2+4}}f\left(\frac{\alpha+\sqrt{\alpha^2+4}}{2}\right) + \frac{\sqrt{\alpha^2+4}-\alpha}{2\sqrt{\alpha^2+4}}f\left(\frac{\alpha-\sqrt{\alpha^2+4}}{2}\right) \text{.}
\]
We note as an aside that
\[
 \|\mu_\alpha\|=\sigma_\alpha^+ + \sigma_\alpha^- = \frac{\sqrt{\alpha^2+4}+\cancel{\alpha}+\sqrt{\alpha^2+4}-\cancel{\alpha}}{2\sqrt{\alpha^2+4}} = 1 \text{,}
\]
for each $\alpha$.
\bigskip

The residue method calculation proceeds as follows: let us suppose that $z\neq2i$, that is to say that the two values of $\vp^{-1}(z)$ are distict. Then
\[
 (C_\vp^*f)(z) = \sum_{t\in\C^+\cap\overline{\vp^{-1}(\overline{z})}} \left(\lim_{s\rightarrow t} \frac{s-t}{\overline{\vp(\overline{s})}-z}\right)f(t)\text{.}
\]
So, using L'H\^opital's Theorem to evaluate the limit, we get
\[
 (C_\vp^*f)(z) = \frac{\sqrt{z^2+4}+z}{2\sqrt{z^2+4}}f\left(\frac{z+\sqrt{z^2+4}}{2}\right) + \frac{\sqrt{z^2+4}-z}{2\sqrt{z^2+4}}f\left(\frac{z-\sqrt{z^2+4}}{2}\right)\text{,}
\]
for $z\neq2i$.

Moreover, we observe that for $z=2i$, we have only one solution to $\overline{\vp(\overline{t})}=z$, namely $t=i$. In this case,
\[
 \Res\left(\frac{f(s)}{\overline{\vp(\overline{s})}-2i},s=i\right) = f(i)\text{,}
\]
so since
\[
 \frac{\sqrt{z^2+4}+z}{2\sqrt{z^2+4}} + \frac{\sqrt{z^2+4}-z}{2\sqrt{z^2+4}} = 1 \text{,}
\]
the formula is still valid for $z=2i$.
\end{eg}

By observing the above examples, we have reason to hope that some other well known results from the disc may have natural analogues in the half-plane. In Example \ref{eg1} above, $C_\vp$ is an isometry when $b=0$, and $|a|=1$, and moreover, we have already observed that Example \ref{eg2} gives an isometry. In both these cases, $\vp$ is inner (it maps the boundary of $\C^+$ to itself), and the AC measures associated with $\vp$ are of constant modulus equal to $1$. We see in \cite{Saksman}, for example, that this is precisely the condition for a composition operator on the disc to give rise to an isometry.

We note finally, that by virtue of the mapping at the beginning of Section \ref{Rational}, every composition operator on $\C^+$ is equivalent to a weighted composition operator on the disc. As such, our observations here may be used to study a certain class of weighted composition operators, which are also of interest.
\bibliographystyle{siam}

\end{document}